\documentclass[12pt]{amsart}
\usepackage[colorlinks=true, pdfstartview=FitV, linkcolor=blue, citecolor=blue, urlcolor=blue]{hyperref}
\usepackage{amssymb,amscd}
\textwidth=1.35\textwidth
\textheight=1.22\textheight
\calclayout 

\def\quot#1#2{#1/\!\!/#2}

\def\C{\mathbb {C}}

\def\R{\mathbb {R}}

\def\N{\mathbb N}

\def\HH{\mathcal{H}}
\def\SL{\operatorname{SL}}
\def\GL{\operatorname{GL}}

\def\PSL{\operatorname{PSL}}

\def\SO{\operatorname{SO}}

\def\Diff{\mathcal D}

\def\inv{^{-1}}
\def\lie#1{{\mathfrak #1}}
\def\lieg{\lie g}
\def\lieh{\lie h}

\def\liesl{\lie {sl}}

\def\liek{\lie {k}}
\def\liet{\lie {t}}

\def\phi{\varphi}

\def\O{\mathcal O}

\def\pr{{\operatorname{pr}}}

\def\mod{\operatorname{mod}}

\def\Aut{\operatorname{Aut}}
\def\HAut{\Aut_{h}}

\def\Im{\operatorname{Im}}

\def\codim{\operatorname{codim}}

\def\rank{\operatorname{rank}}

\def\diag{\operatorname{diag}}
\def\Var{\operatorname{Var}}

\def\ql{{\operatorname{{q\ell}}}}

\def\W{\mathcal W}
\def\X{\mathfrak X}
\numberwithin{equation}{subsection}

\newtheorem{theorem}[subsection]{Theorem}
\newtheorem{lemma}[subsection]{Lemma}
\newtheorem{proposition}[subsection]{Proposition}
\newtheorem{corollary}[subsection]{Corollary}

\theoremstyle{definition}

\newtheorem{definition}[subsection]{Definition}

\newtheorem{questions}[subsection]{Questions}

\theoremstyle{remark}

\newtheorem{remark}[subsection]{Remark}

\title[Lifting automorphisms of quotients of adjoint representations]{\boldmath Lifting automorphisms of  quotients of adjoint representations} 
 \author{Gerald W. Schwarz}
\address{Department of Mathematics\\
Brandeis University\\
Waltham, MA 02454-9110}
\email{schwarz@brandeis.edu}
\subjclass[2010]{20G20, 22E46, 57S15}
\keywords{differential operators, automorphisms, quotients, adjoint representation}

\begin{document}
\begin{abstract}
 Let $\lieg_i$ be a simple complex Lie algebra, $1\leq i \leq d$, and let $G=G_1\times\dots\times G_d$ be the corresponding adjoint group. Consider the $G$-module $V=\oplus r_i\lieg_i$   where $r_i\in \N$ for all $i$. We say that $V$ is \emph{large} if all $r_i\geq 2$ and  $r_i\geq 3$ if $G_i$ has rank 1.    In \cite{SchAutoms} we showed that when $V$ is large any algebraic automorphism $\psi$ of the quotient $Z:=\quot VG$ lifts to an algebraic  mapping $\Psi\colon V\to V$ which sends the fiber over $z$ to the fiber over $\psi(z)$, $z\in Z$. (Most cases were already handled in Kuttler \cite{Kuttler}.)\
 We also showed that one can choose a biholomorphic lift $\Psi$ such that $\Psi(gv)=\sigma(g)\Psi(v)$, $g\in G$, $v\in V$, where $\sigma$ is an automorphism of $G$. This leaves open  the following questions: Can one lift holomorphic automorphisms of $Z$? Which automorphisms lift if  $V$ is not large? We answer  the first question in the affirmative and also answer the second question.  Part of the proof involves establishing the following result for $V$ large. Any algebraic differential operator of order $k$ on $Z$ lifts to a  $G$-invariant algebraic differential operator of order $k$ on $V$. We also consider the analogues of the questions  above for actions of compact Lie groups.
 \end{abstract}

\maketitle

\section{Introduction}\label{sec:intro}
Our base field is $\C$, the field of complex numbers. Let $G$ be a complex reductive group and $X$  a  smooth affine $G$-variety. We denote the algebra of polynomial functions on $X$ by $\O(X)$. For the following, we refer to \cite{KraftBook}, \cite{LunaSlice} and \cite{PopovVinberg}. By Hilbert, the algebra $\O(X)^G$ is finitely generated, so that we have a quotient variety $Z:=\quot XG$ with coordinate ring $\O(Z)=\O(X)^G$. Let $\pi\colon X\to Z$ denote the morphism dual to the inclusion $\O(X)^G\subset\O(X)$. Then $\pi$ sets up a bijection between the points of $Z$ and the closed orbits in $X$. If $Gx$ is a closed orbit, then the isotropy group $H=G_x$ is reductive. The \emph{slice representation of $H$ at $x$\/} is its action on $N_x$ where $N_x$ is an $H$-complement to $T_x(Gx)$ in $T_x(X)$. Let $Z_{(H)}$ denote the points of $Z$ such that the isotropy groups of the corresponding closed orbits are in the conjugacy class $(H)$ of $H$.  The $Z_{(H)}$ give a finite stratification of $Z$ by locally closed smooth subvarieties. In particular, there is a unique open stratum $Z_{(H)}$, the \emph{principal stratum}, which we also denote by $Z_\pr$. We call  $H$ a \emph{principal isotropy group} and any associated closed orbit a \emph{principal orbit\/} of $G$.   

As shorthand for saying that $X$ has finite  principal isotropy groups we say that $X$ has FPIG. If $X$ has FPIG, then  there is an open set of closed orbits and a closed orbit is principal if and only if the slice representation of its isotropy group is trivial. Set $X_\pr:=\pi\inv(Z_\pr)$.  We say that $X$ is \emph{$k$-principal\/} if $X$ has FPIG and $\codim X\setminus X_\pr\geq k$.

For $Y$ an affine variety let $\Aut(Y)$ (resp.\ $\HAut(Y)$) denote the automorphisms (resp.\ biholomorphisms) of $Y$.  If $y\in Y$, we denote by $\HAut(Y,y)$ the germs of biholomorphisms of $Y$ which fix $y$.

Our main interest is in the following case. Let $V=\oplus_{i=1}^d r_i\lieg_i$ where the $\lieg_i$ are  simple complex Lie algebras and $r_i\lieg_i$ denotes the direct sum of $r_i$ copies of $\lieg_i$, $r_i\in\N$. Let $G_i$ denote the adjoint group of $\lieg_i$ and let $G$ denote the product of the $G_i$. We denote by $0\in Z:=\quot VG$ the image of $0\in V$. Let $\psi\in\HAut(Z)$. 
We say that  $\Psi\in\HAut(V)$ is a \emph{lift of $\psi$\/}  if $\pi\circ\Psi=\psi\circ\pi$. Equivalently,  $\Psi$   maps the fiber   $\pi\inv(z)$  to the fiber $\pi\inv(\psi(z))$, $z\in Z$. Let $\sigma$ be an automorphism of $G$. We say that $\Psi$ is \emph{$\sigma$-equivariant\/} if $\Psi(gv)=\sigma(g)\Psi(v)$ for all $v\in V$, $g\in G$. 
We say that $V$ is \emph{large\/} if 
 $r_i\geq 2$ for all $i$ and  $r_i\geq 3$ if $\lieg_i\simeq\liesl_2$.

  \begin{lemma}
  The following are equivalent:
 \begin{enumerate}
 \item $V$ is large.
\item $Z$ has no codimension one strata.
\item $V$ is $2$-principal.
\item Any $\psi\in\HAut(Z)$   preserves the stratification of $Z$.
\item No $\quot{(r_i\lieg_i)}{G_i}$ is smooth.
\end{enumerate}
 \end{lemma}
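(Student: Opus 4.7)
The plan is to exploit the product decomposition $Z = Z_1 \times \cdots \times Z_d$ with $Z_i := \quot{(r_i\lieg_i)}{G_i}$; the Luna stratification of $Z$ is the product stratification and $V_\pr = \prod_i (r_i\lieg_i)_\pr$, so conditions (1), (2), (3), and (5) reduce to questions about each $(r_i,\lieg_i)$ individually, and (4) reduces similarly once one argues that any biholomorphism of such a product must permute the irreducible factors along with their strata.

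For (1) $\Leftrightarrow$ (5) I would appeal to the classification of smooth adjoint quotients: $Z_i$ is smooth precisely when $r_i = 1$ (by Chevalley, $Z_i \cong \C^{\rank\lieg_i}$) or when $(r_i,\lieg_i) = (2,\liesl_2)$ (the traces $\tr(X_1^2), \tr(X_2^2), \tr(X_1X_2)$ freely generate $\O(Z_i)$, so $Z_i \cong \C^3$), which are precisely the cases excluded by largeness. For (1) $\Leftrightarrow$ (3) $\Leftrightarrow$ (2), the first step is to note that $r_i\lieg_i$ has FPIG iff $r_i \geq 2$ (a single adjoint copy has Cartan isotropy, whereas two generic elements of $\lieg_i$ share only the trivial centralizer in $G_i$). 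Given FPIG, one computes the codimension of the smallest non-principal Luna stratum of $Z_i$: for $(r_i,\lieg_i) = (2,\liesl_2)$ the commuting-pair quadric $\tr(X_1X_2)^2 = \tr(X_1^2)\tr(X_2^2)$ is a hypersurface in $Z_i \cong \C^3$, but in every other large case the dominant non-principal stratum (tuples in a common Cartan) has codimension $\geq 2$ by a direct count, and all deeper strata have strictly larger codimension. Under FPIG the fibres of $\pi$ over $Z_\pr$ have the maximum dimension $\dim G$, so $\codim_Z(Z\setminus Z_\pr) = \codim_V(V \setminus V_\pr)$, yielding (3) $\Leftrightarrow$ (2).

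The delicate step is (2) $\Leftrightarrow$ (4). For $\neg(2) \Rightarrow \neg(4)$, in each failing case $Z$ has a factor isomorphic to affine space carrying a codimension-one stratum (a Weyl group discriminant when $r_i = 1$, or the commuting-pair quadric above when $(r_i,\lieg_i) = (2,\liesl_2)$), and one can exhibit explicit polynomial automorphisms of the ambient affine space that do not preserve it. For $(2) \Rightarrow (4)$, the plan is to identify Luna strata from the intrinsic geometry of $Z$: under (2), Luna's slice theorem forces $Z_\pr = Z_\sm$, because otherwise the slice representation at some non-principal closed orbit would have to be a complex reflection representation of a finite isotropy group, and any such slice would produce a codimension-one stratum in $Z$, contradicting (2). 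Hence any $\psi \in \HAut(Z)$ preserves $Z_\sm = Z_\pr$, and iterating the argument on the closure of each remaining stratum, whose local geometry is again of the form $\quot N H$ by the \'etale slice theorem, propagates the invariance down the whole stratification. The main obstacle will be the exclusion of reflection slice representations under (2); this is a case check enabled by the explicit description of the slice representations occurring for $V = \oplus r_i\lieg_i$.
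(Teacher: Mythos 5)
Your route is genuinely different from the paper's. The paper disposes of the whole lemma in four sentences by chaining citations: $(1)\Leftrightarrow(2)$ is \cite[Prop.\ 3.1]{SchVectorFields}, $(2)\Rightarrow(3)$ is \cite[Cor.\ 7.4]{SchLifting} (using only that $G$ is semisimple), $(3)\Rightarrow(4)$ is \cite[Thm.\ 1.2]{SchVectorFields} (using orthogonality of $V$), and $(4)\Rightarrow(5)\Rightarrow(1)$ are immediate. The engine behind $(3)\Rightarrow(4)$ is the strata-preserving vector field argument, which is exactly what is re-proved as Corollary~\ref{cor:preservestrata} in \S 2 of this paper: one shows that the evaluation of $\pi_*\X(V)^G$ at a point $s$ is $T_sS$, then observes that an automorphism of $\O(Z)$ conjugates strata-preserving derivations to strata-preserving derivations (automatically, when there are no codimension-one strata), so it must carry strata to strata. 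You instead try to recognize strata intrinsically via $Z_{\sm}$, which is a legitimate alternative in the spirit of Kuttler's work, but it is more than a ``case check'' as you suggest.

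Two points in your $(2)\Rightarrow(4)$ sketch need repair. First, you claim that a non-principal smooth point of $Z$ would force a reflection slice representation of a \emph{finite} isotropy group, but FPIG only controls the principal isotropy; boundary strata of $\oplus r_i\lieg_i$ have positive-dimensional stabilizers (e.g.\ tori arising from regular semisimple directions), and Shephard--Todd--Chevalley says nothing about those. To conclude $Z_\sm = Z_\pr$ you would need a smoothness criterion for $\quot{N'}{H}$ with $H$ reductive of positive dimension, which is a different (and harder) input. Second, ``iterating the argument on the closure of each remaining stratum'' tacitly uses that $\mathrm{Sing}(\overline{Z_{(H)}})$ is precisely $\overline{Z_{(H)}}\setminus Z_{(H)}$, i.e.\ that the stratification is recovered layer by layer from singular loci; this is true under the orthogonality and $2$-principal hypotheses but is itself a nontrivial theorem (it is part of what \cite{SchLifting} provides), not a formal consequence of the \'etale slice theorem. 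The remaining implications in your plan ($(1)\Leftrightarrow(5)$ by classifying the smooth $Z_i$, $(1)\Leftrightarrow(3)\Leftrightarrow(2)$ by the codimension count, and $\neg(2)\Rightarrow\neg(4)$ by translations on an affine factor) are sound and match what the cited results encode.
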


\begin{proof}
By \cite[Proposition 3.1]{SchVectorFields}, (1) and  (2) are equivalent. Since $G$ is semisimple, \cite[Corollary 7.4]{SchLifting} shows that (2) implies (3). Since $V$ is an orthogonal representation,  \cite[Theorem 1.2]{SchVectorFields} shows that (3) implies (4), which in turn clearly implies (5). Since some $\quot{(r_i\lieg_i)}{G_i}$ is smooth when $V$ is not large, (5) implies (1)
\end{proof}

The lifting problem for $V$ has been investigated in  \cite{Kuttler} and \cite{SchAutoms}. In \cite{SchAutoms} we showed that $V$ has the following  lifting property.

\begin{theorem}\label{thm:adjoint}
Let $V$  be large. Then for any $\psi\in\Aut(Z)$ there is a morphism $\Psi\colon V\to V$ such that $\pi\circ \Psi=\psi\circ\pi$.
\end{theorem}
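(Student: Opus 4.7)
The plan is to construct $\Psi$ first over the principal stratum of $Z$ and then extend across its complement by normality and the codimension bound. By Lemma 1.5, $V$ large implies $V$ is $2$-principal and that $\psi$ preserves the stratification; in particular $\psi$ restricts to an automorphism $\psi_\pr$ of $Z_\pr$. Because $G$ is adjoint, the (automatically finite) principal isotropy groups are contained in the trivial center and so are themselves trivial, which makes $\pi \colon V_\pr \to Z_\pr$ a principal $G$-bundle in the \'etale topology.

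Lifting $\psi_\pr$ to an equivariant morphism over $Z_\pr$ amounts to producing a $G$-bundle isomorphism $V_\pr \tosim \psi_\pr^* V_\pr$. Allowing a twist by some $\sigma \in \Aut(G)$ gives the extra flexibility of comparing $\psi_\pr^* V_\pr$ with the $\sigma$-twist of $V_\pr$ instead. I would construct this isomorphism one simple factor at a time, exploiting the Chevalley description $\lieg_i \quot{}{} G_i \cong \liet_i / W_i$ and its multi-copy analogue on the principal stratum, under which $V_\pr$ is descended from $\oplus_i r_i \liet_i$ along a finite Galois cover with group $\prod_i W_i$. The largeness hypothesis is used to show, following Kuttler \cite{Kuttler}, that any algebraic automorphism of $Z$ lifts to a $\prod_i W_i$-equivariant automorphism of $\oplus_i r_i \liet_i$, possibly after composition with an outer automorphism of $\prod_i W_i$ that lifts to an outer automorphism $\sigma$ of $G$.

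The main obstacle is this lifting step across the finite cover: converting an abstract algebraic automorphism of the possibly singular quotient into an equivariant automorphism of its smooth Galois cover. This is exactly where the lower bounds on the $r_i$ are essential, since without them $\quot{(r_i \lieg_i)}{G_i}$ can become smooth and acquire exotic automorphisms with no equivariant lift (this is the implication (5)$\Rightarrow$(1) of Lemma 1.5 read contrapositively). Granting this step, assembling the factorwise lifts produces $\Psi_\pr \colon V_\pr \to V_\pr$ lifting $\psi_\pr$. Finally, since $V$ is smooth and hence normal and $\codim_V(V \setminus V_\pr) \geq 2$, the component functions of $\Psi_\pr$ in the linear coordinates on $V$ are regular on a codimension $\geq 2$ dense open subset of $V$ and therefore extend to regular functions on all of $V$, yielding a morphism $\Psi \colon V \to V$. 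The identity $\pi \circ \Psi = \psi \circ \pi$ holds on the dense set $V_\pr$ and hence on $V$, completing the construction.
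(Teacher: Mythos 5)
Your overall skeleton---lift over the principal stratum, then extend across the complement by algebraic Hartogs, using that $V$ is smooth and $2$-principal---is sound as far as it goes, and the observation that $\pi\colon V_\pr\to Z_\pr$ is an \'etale-locally trivial principal $G$-bundle is correct (though your justification of trivial principal isotropy, ``finite, hence contained in the trivial center,'' is not a valid inference: finite isotropy groups need not be central; the correct argument is that for $r_i\geq 2$ the generic stabilizer is the intersection of a maximal torus with the stabilizer of a second generic element, which is the center of $G_i$, trivial since $G_i$ is adjoint). Note also that the present paper does not prove this theorem at all: it is imported from \cite{SchAutoms} (most cases from \cite{Kuttler}), where the proof runs through entirely different machinery (invariant differential operators, surjectivity of $\pi_*$, deformation of $\psi$ to a quasilinear automorphism), not through the torsor picture you describe.

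The genuine gap is your central step. You assert a ``multi-copy analogue'' of the Chevalley description, under which $V_\pr$ (or $Z_\pr$) is descended from $\oplus_i r_i\liet_i$ along a finite Galois cover with group $\prod_i\W_i$. No such statement holds when $r_i\geq 2$: already on dimension grounds, $\dim Z=\sum_i(r_i-1)\dim\lieg_i$ (since the principal isotropy is trivial), whereas $\dim\bigl(\oplus_i r_i\liet_i\bigr)/\prod_i\W_i=\sum_i r_i\rank\lieg_i$, so $Z_\pr$ is not a finite quotient of $\oplus_i r_i\liet_i$, and $V_\pr$ certainly does not descend from it. The isomorphism $\quot{\lieg_i}{G_i}\simeq\liet_i/\W_i$ is available only for $r_i=1$, which is precisely the non-large case; indeed the paper uses the Weyl-group argument only in \S 4, for the adjoint representation itself. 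Consequently the step that was supposed to carry all the content---``lifting across the finite cover,'' which you attribute to Kuttler---has no starting point, and the attribution is inaccurate: neither \cite{Kuttler} nor \cite{SchAutoms} reduces the large case to $\W$-equivariant automorphisms of a sum of Cartan subalgebras. Even granting the torsor formulation, you never give any argument that $\psi_\pr^*V_\pr$ is isomorphic to $V_\pr$ (up to a twist by some $\sigma\in\Aut(G)$) as a $G$-bundle over $Z_\pr$; that is the actual content of the theorem and in your write-up it is simply asserted.
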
 

Most cases of the theorem  were first established by Kuttler \cite{Kuttler}. The lift given in the theorem is not necessarily an automorphism of $V$ nor is it necessarily $\sigma$-equivariant for some $\sigma$. 
From \cite{SchAutoms} we get the following 

\begin{corollary}\label{cor:largelifting}
Let $V$ and $\psi$ be as above. Then there is an automorphism $\sigma$ of $G$  and a $\sigma$-equivariant lift $\Psi\in\HAut(V)$. Hence $\psi$  sends $Z_{(H)}$ to $Z_{(\sigma(H))}$ for every stratum $Z_{(H)}$ of $Z$.
\end{corollary}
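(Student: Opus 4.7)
The plan is to start from the algebraic lift produced by Theorem~\ref{thm:adjoint} and bootstrap it into a $\sigma$-equivariant biholomorphism, first working on the principal stratum and then extending via Hartogs.

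First I would invoke Theorem~\ref{thm:adjoint} to obtain an algebraic lift $\Psi_0\colon V\to V$ of $\psi$. Since $V$ is large, item~(4) of the Lemma tells us that $\psi$ preserves the stratification of $Z$; in particular $\psi(Z_\pr)=Z_\pr$. A fiber of $\pi$ over a principal point is a single (principal) orbit, so $\Psi_0$ restricts to a morphism $V_\pr\to V_\pr$ covering $\psi|_{Z_\pr}$.

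Next I would use the principal-bundle structure of $V_\pr\to Z_\pr$ to manufacture $\sigma$. Fix a principal point $v_0\in V_\pr$ and set $v_1=\Psi_0(v_0)$. Comparing the two $G$-actions through the map $g\mapsto\Psi_0(g\cdot v_0)$, which lands in the principal orbit $G\cdot v_1$, and using that the principal isotropy groups are finite subgroups of the adjoint semisimple group $G$, this comparison descends to a group automorphism $\sigma\in\Aut(G)$. The same comparison lets me gauge-transform $\Psi_0|_{V_\pr}$ to a holomorphic map $\Psi'\colon V_\pr\to V_\pr$ satisfying $\Psi'(gv)=\sigma(g)\Psi'(v)$: equivariance is forced once I specify $\Psi'$ on a local section of $\pi$, and such sections exist holomorphically on the smooth base $Z_\pr$.

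Finally, since $V$ is $2$-principal by the Lemma, $V\setminus V_\pr$ has complex codimension at least two in $V$, so Hartogs' theorem extends $\Psi'$ holomorphically to all of $V$; $\sigma$-equivariance persists by continuity. Applying the same construction to $\psi^{-1}$ produces an inverse, so $\Psi\in\HAut(V)$. The stratum assertion is then immediate: $\sigma$-equivariance forces $G_{\Psi(v)}=\sigma(G_v)$, so $\Psi(V_{(H)})=V_{(\sigma(H))}$ and hence $\psi(Z_{(H)})=Z_{(\sigma(H))}$. The main obstacle is the middle step: producing $\sigma$ canonically from the bundle comparison and checking that the equivariant gauge modification is globally well defined on $V_\pr$, which is where the adjoint and semisimple structure of $G$ plays a crucial role.
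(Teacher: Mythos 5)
Your proposal has a genuine gap at the step you yourself flag as ``the main obstacle,'' and it is not a gap that can be closed by the method you sketch. Theorem~\ref{thm:adjoint} produces a morphism $\Psi_0\colon V\to V$ with $\pi\circ\Psi_0=\psi\circ\pi$, and the paper is explicit that this $\Psi_0$ is \emph{not} necessarily an automorphism nor $\sigma$-equivariant for any $\sigma$. Consequently the map $g\mapsto\Psi_0(g\cdot v_0)$ from $G$ to the orbit $G\cdot v_1$ is merely a morphism of varieties with no group structure attached: it need not be dominant, need not be injective, and there is no mechanism by which it ``descends'' to an element of $\Aut(G)$. Indeed, even an \emph{isomorphism} of varieties $G/H\to G/H'$ (with $H,H'$ finite) that does happen to exist carries no canonical group-theoretic content --- $G$ is a rational variety with an enormous automorphism group as a variety, almost none of which comes from $\Aut(G)$. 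The finiteness of principal isotropy and the adjointness of $G$ do not rescue this: they control $\Aut(G)$ and $N_{\GL(V)}(G)$, but they say nothing about an arbitrary fiber-preserving morphism. The subsequent ``gauge modification'' over local sections inherits the same problem: with no $\sigma$ and no equivariance of $\Psi_0$ to compare against, there is nothing to gauge to, and no way to check compatibility of the local modifications on overlaps.

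For what it is worth, the paper simply cites \cite{SchAutoms} for this corollary, and the route taken there (as is visible from the structure of the present paper, cf.\ Theorem~\ref{thm:mainhelp}, Definition~\ref{defn:admissible}, and the citations to \cite[1.9, 1.16, 2.2]{SchAutoms}) is entirely different: one shows $V$ is admissible, hence good (so $\psi$ is deformable along the $\C^*$-action to a quasilinear $\psi_0$), shows that quasilinear strongly-stratification-preserving automorphisms lift to linear $\sigma$-equivariant elements of $N_{\GL(V)}(G)$, and then lifts the isotopy $\psi_t\circ\psi_0^{-1}$ equivariantly by integrating a lifted time-dependent invariant vector field. The $\sigma$-equivariance is built in at the linear stage and propagated through the isotopy, rather than extracted a posteriori from a non-equivariant lift --- which is precisely what your proposal cannot do. Your Hartogs step and the inverse step are fine as far as they go, but they rest on a middle step that does not hold.
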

\noindent  The action of $\psi$ on the strata is obtained by a different method in \cite{Kuttler}.

\smallskip
The above results beg the following 
\begin{questions}
\begin{enumerate}
\item Can we lift elements of $\HAut(Z)$ to $\sigma$-equivariant elements of $\HAut(V)$?
\item Which automorphisms lift if $V$ is not large?
\end{enumerate}
\end{questions}
\medskip
We find necessary and sufficient conditions for a $\psi$ in $\HAut(Z)$ 
 to have a holomorphic $\sigma$-equivariant lift. The extra conditions that one needs have to do with the codimension one strata of $Z$.

Suppose that some $r_i$ is 1. Then $Z_i:=\quot {\lieg_i}{G_i}\simeq\liet_i/\W_i$ where $\liet_i$ is the Lie algebra of a maximal torus of $G_i$ and $\W_i$ is the Weyl group. Now the closures of the codimension one strata of $Z_i$ are the images of the reflection  hyperplanes in $\liet_i$. If $\lieg_i$ has two root lengths, then we get two strata of codimension one in $Z$, corresponding to the reflection hyperplanes of the short roots and the long roots. Let $D_{i, s}$ and $D_{i,\ell}$  be the corresponding divisors of $Z$.  It $\lieg_i$ is simply laced, then $D_{i,s}=\emptyset$.  
Let $D_s$ and $D_\ell$ be the union of the $D_{i,s}$ and $D_{i,\ell}$ for $r_i=1$. 

There is one other way of getting a codimension one stratum, which occurs  if $r_i=2$ and $\lieg_i\simeq\liesl_2$. Then $(2\liesl_2,\PSL_2)\simeq (2\C^3,\SO_3)$. Thus the quotient is $\C^3$ and the closure of the codimension one stratum is the zero  set  $D_i$ of a non degenerate quadratic form $xy-z^2$. The corresponding isotropy class is   $(\SO_2)$. Let $D_0$ denote the union of the $D_i$ for $r_i=2$ and $\lieg_i\simeq\liesl_2$. Then the closure  of the codimension one strata of $Z$ is $D:=D_0\cup D_s\cup D_\ell$. 

Here is our main theorem.

\begin{theorem}\label{thm:main}
Let $V$, $G$, $D$ and $D_s$ be as above. Let $\psi\in\HAut(Z)$.  
 Then $\psi$ has a $\sigma$-equivariant  biholomorphic lift for some $\sigma$ if and only if $\psi$ preserves $D$ and $D_s$.
\end{theorem}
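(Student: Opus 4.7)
\emph{Necessity.} Suppose $\Psi\in\HAut(V)$ is a $\sigma$-equivariant lift of $\psi$. Then $\Psi$ maps each closed orbit with isotropy $H$ to a closed orbit with isotropy $\sigma(H)$, so $\psi$ sends $Z_{(H)}$ to $Z_{(\sigma(H))}$. The closures of the codimension one strata fall into three types distinguished by their slice representations: the $\SO_2\subset\PSL_2$ type coming from the $r_i=2$, $\lieg_i=\liesl_2$ factors (forming $D_0$), the short-root reflection type from the $r_i=1$ factors (forming $D_s$), and the long-root reflection type (forming $D_\ell$). Since $\sigma$ acts on each simple factor of $G$ by a $\C$-linear Lie algebra automorphism, and such automorphisms preserve root lengths---the $\FF_4$ diagram automorphism swapping short and long roots is not a $\C$-automorphism---$\sigma$ carries each type into itself. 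Hence $\psi$ preserves $D_0$, $D_s$ and $D_\ell$ individually, and in particular preserves $D$ and $D_s$.

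\emph{Sufficiency.} I would adapt the strategy for the large case (Corollary \ref{cor:largelifting}, upgraded from algebraic to biholomorphic via the differential operator lifting announced in the abstract) to the non-large setting, using the hypotheses on $D$ and $D_s$ to substitute for item (4) of the preceding lemma. First, I would show that preservation of $D$ and $D_s$, together with the identification of the irreducible components of $D$ by their slice representations, forces $\psi$ to permute the codimension one strata respecting type. Propagating to higher codimension via the closure relations in the Luna stratification and slice-representation analysis at deeper strata, one should obtain that $\psi$ preserves the entire stratification, with the induced permutation realized by an automorphism $\sigma$ of $G$. Second, with $\sigma$ fixed, I would build a $\sigma$-equivariant algebraic lift: invoke Theorem \ref{thm:adjoint} on the large subrepresentation of $V$ and handle each non-large factor directly. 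For $r_i=1$, a stratum-preserving biholomorphism of $\liet_i/\W_i$ is induced by a $\W_i$-equivariant biholomorphism of $\liet_i$, since preservation of $D_s$ versus $D_\ell$ distinguishes the short and long reflection hyperplanes. For a $2\liesl_2$ factor, any biholomorphism of the quotient $\C^3$ preserving the quadric $xy-z^2=0$ is induced by an $\Orth_3$-equivariant linear map on $2\C^3$. Third, upgrade the resulting algebraic lift to a biholomorphic $\sigma$-equivariant lift of $\psi$ using the differential operator lifting theorem (lifting holomorphic vector fields on $Z$ to $G$-invariant holomorphic vector fields on $V$ and integrating).

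\emph{Main obstacle.} The decisive step is the first one in the sufficiency direction: showing that preservation of $D$ and $D_s$ forces preservation of the entire Luna stratification. This is the non-large analogue of item (4) in the preceding lemma, and the hypotheses on $D$ and $D_s$ must exactly compensate for the failure of item (2) in the non-large setting. The argument will likely proceed by induction on codimension, with slice-representation computations propagating the codimension one information to deeper strata and simultaneously determining the permutation of isomorphic simple factors of $G$ carried out by $\sigma$. A secondary technical difficulty is the upgrade from algebraic to biholomorphic lift; while the differential operator lifting is tailored to this purpose, turning the lifted infinitesimal data into convergent holomorphic maps on all of $V$ will require care.
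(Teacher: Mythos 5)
Your necessity argument is correct and matches the paper's: a $\sigma$-equivariant lift sends $Z_{(H)}$ to $Z_{(\sigma(H))}$, and automorphisms of a complex simple Lie algebra preserve root lengths, so $\sigma$ cannot interchange short-root and long-root strata. Your sufficiency sketch has the right large-scale shape (reduce to the linear part, lift, integrate) but is missing the pivot on which the whole argument turns, and contains one false claim.

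The decisive missing ingredient is the \emph{deformation to a quasilinear automorphism}. Your step two asks to ``build a $\sigma$-equivariant algebraic lift'' of $\psi$ directly and to ``invoke Theorem \ref{thm:adjoint},'' but that theorem applies to $\psi\in\Aut(Z)$; your $\psi$ is merely biholomorphic, so there is nothing algebraic to lift. The paper's architecture (Theorem \ref{thm:mainhelp}) is to first prove $V$ is \emph{good}: the family $\psi_t(z)=t^{-1}\cdot\psi(t\cdot z)$ extends over $t=0$, producing a quasilinear $\psi_0\in\Aut_{\ql}(Z)$. Only $\psi_0$ is lifted to a linear $\Psi_0\in N_{\GL(V)}(G)$; the isotopy $\psi_t\circ\psi_0^{-1}$ (from $\Id$ to $\psi\circ\psi_0^{-1}$) is then integrated after lifting the associated time-dependent strata-preserving vector field via $\pi_*\colon\X_h(V)^G\to\X_h(Z)$. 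Without the deformation you have no isotopy to integrate, and nothing for the vector-field lifting to act on. Relatedly, your attribution of the role of the differential-operator theorem is off: the order-$k$ lifting result is used to prove that the large piece $V_3$ is \emph{admissible, hence good (deformable)}; the final integration step needs only the much older order-one (vector field) lifting from \cite{SchLifting}.

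A second, concrete error: ``any biholomorphism of the quotient $\C^3$ preserving the quadric $xy-z^2=0$ is induced by an $\Orth_3$-equivariant linear map'' is false. For any holomorphic $f$ on $\C^3$, $(x,y,z)\mapsto(e^{f}x,\,e^{-f}y,\,z)$ is a biholomorphism preserving $xy-z^2$ identically, and it is not linear. The paper sidesteps this precisely because it only needs to lift the \emph{quasilinear} (hence actually linear) $\psi_0$, for which $\diag(A_1,\dots,A_n)$ with $A_i$ in $\C^*\cdot\Orth(xy-z^2)$ is the correct classification (Proposition \ref{prop:sl2good}). The same issue attends your $r_i=1$ discussion: lifting a strata-preserving biholomorphism of $\liet_i/\W_i$ to a $\W_i$-equivariant biholomorphism of $\liet_i$ is fine, but the further lift to $\lieh_i$ (Theorem \ref{thm:centralizer}) works only after reducing to the linear $\tau=\tau'(0)$.

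Finally, your proposed proof of stratification preservation by induction on codimension and slice analysis is not the paper's route. The paper uses the global vector-field argument (Proposition \ref{prop:codimone}, Corollary \ref{cor:preservestrata}, Remark \ref{rem:morecases}): $\psi^*$ stabilizes $\X(Y)$ because derivations preserving codimension-one strata preserve all strata, and the pointwise rank of $\X(Y)$ recovers the stratum dimension. Note also that this step needs only preservation of $D$; preservation of $D_s$ enters later, in making the linear lift $\sigma$-equivariant. Your codimension-induction might be made to work, but it is more work than necessary and does not isolate the distinct roles of the two hypotheses.
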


If  $\Psi\colon V\to V$ is a   $\sigma$-equivariant biholomorphic lift of $\psi$, then   $\psi$ sends $Z_{(H)}$ to $Z_{\sigma(H)}$ for all $(H)$. Thus $\psi$ permutes the  strata, in particular the codimension one strata. Moreover, if $(H)$ corresponds to the short roots of some $\lieg_i$ (so $r_i=1$), then $(\sigma(H))$ corresponds to the short roots of some $\lieg_j$ with $r_j=1$. Hence $\psi$ has to stabilize $D_s$ and we see that the conditions of the theorem are necessary. For sufficiency we proceed in several steps. The first step, taken in \S 2, is to   show that if $\psi$ preserves  $D$, then $\psi$ \emph{preserves the stratification\/}, i.e., it permutes the strata.   If $\psi$ also preserves $D_s$, then we say that $\psi$ is \emph{strongly stratification preserving\/}.

Next we need to use an action of $\C^*$ on $Z$.  For $z=\pi(v)$, $t\cdot z=\pi(tv)$, $t\in\C^*$.  One can also see the action   as follows.  Let $p_1,\dots,p_r$ be homogeneous generators of $\O(V)^G$ and let $p=(p_1,\dots,p_r)\colon V\to\C^r$. Let $Y$ denote the image of $p$. Then we can identify $Y$ with $Z=\quot VG$. If $e_i$ is the degree of $p_i$, then we have a $\C^*$-action on $Y$ where $t\in\C^*$ sends  $(y_1,\dots,y_r)\in Y$ to $(t^{e_1}y_1,\dots,t^{e_r}y_r)$. The isomorphism $Y\simeq Z$ identifies the two $\C^*$-actions.  

We say that $\psi\in\Aut(Z)$ is \emph{quasilinear} if $\psi(t\cdot z)=t\cdot\psi(z)$ for all $z\in Z$ and $t\in\C^*$. We write $\psi\in\Aut_\ql(Z)$. A main idea, as in 
\cite{SchAutoms}, is to deform a general $\psi$ to one that is quasilinear. Assume that $\psi(0)=0$. For $t\in\C^*$, let $\psi_t(z)$ denote $t\inv\cdot\psi(t\cdot z)$. We say that $\psi\in\HAut(Z,0)$ is \emph{deformable\/} if $\psi_0(z):=\lim_{t\to 0}\psi_t(z)$ exists for all $z$ sufficiently close to $0$. Let $S=\oplus S_k$ denote the graded ring $\O(Z)$, where the grading comes from the $\C^*$-action.  Let $\HH(Z)$ denote the holomorphic functions on $Z$ and $\HH(Z,0)$ the germs of holomorphic functions at $0\in Z$. Then $\psi$ is deformable if and only if $\psi^*S_k\subset S_k\cdot\HH(Z,0)$ for all $k$. Equivalently, $\psi^*y_i\in S_{e_i}\cdot\HH(Z,0)$ for all $i$. This is a vanishing condition on some of the partial derivatives of $\psi$ at $0$. If $\psi$ is deformable, then  $\psi_0^*$ sends $y_i$ to the term homogeneous of degree $e_i$ of the Taylor series of $\psi^*y_i$ at $0$, considered as an element of $\O(Z)$. Clearly, 
if $\psi$ is deformable, the family $\psi_t$ (including $t=0$) is holomorphic in $z$ and $t$. (If $\psi=\psi(x,z)$ also depends holomorphically on  parameters $x\in X$, then the family $\psi_t(x,z)$ is holomorphic in $t$, $x$ and $z$.)\  Each $\psi_t$ preserves the germ of $Z$ at $0$ and $\psi_0$ preserves $Z$. If $\psi$ and $\psi\inv$ are deformable, then $(\psi\inv)_0$ is an inverse to $\psi_0$ so that $\psi_0\in\Aut_\ql(Z)$. If $\psi$ is also (strongly) stratification preserving, then so is  $\psi_0$ since the strata are invariant under the action of $\C^*$. We say that $V$   is \emph{good\/} if every stratification preserving $\psi\in\HAut(Z,0)$ is deformable.
We say that $V$ has the \emph{lifting property\/} if every strongly stratification preserving $\psi\in\Aut_\ql(Z)$ has a $\sigma$-equivariant lift $\Psi\in\GL(V)$ for some $\sigma$. Then  $\Psi\circ g\circ\Psi\inv=\sigma(g)$, so that  $\Psi\in N_{\GL(V)}(G)$, the normalizer of $G$ in $\GL(V)$. 

\begin{theorem}\label{thm:mainhelp}
Let $(V,G)$ be as above. Then
\begin{enumerate}
\item $V$ is good.
\item $V$ has the lifting property
\end{enumerate}
\end{theorem}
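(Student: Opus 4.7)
The plan is to establish (1) and (2) simultaneously using the product decomposition $Z = \prod_i Z_i$ with $Z_i := \quot{r_i\lieg_i}{G_i}$, the graded structure of $S = \O(Z) = \bigoplus S_k$, and the earlier lifting results for large $V$ (Theorem~\ref{thm:adjoint} and Corollary~\ref{cor:largelifting}). The non-large factors fall into only two shapes, $r_i=1$ (where $Z_i = \liet_i/\W_i$) and $(r_i,\lieg_i)=(2,\liesl_2)$ (where $Z_i\cong\C^3$ with a quadric discriminant), both amenable to direct invariant-theoretic analysis.

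For (2), first observe that any $\psi\in\Aut_\ql(Z)$ preserves the grading $S = \bigoplus S_k$, sending each finite-dimensional $S_k$ to itself, hence is a graded algebra automorphism and in particular algebraic. Strong stratification preservation, together with the fact that strata of $Z_i$ are labelled by $G_i$-conjugacy classes of isotropy, forces $\psi$ to respect the decomposition $Z=\prod_i Z_i$ up to permutations of indices $i$ for which the $Z_i$ are isomorphic; this reduces the problem to a single factor type. On each large factor (those with $r_i\geq 2$, and $r_i\geq 3$ for $\liesl_2$), Corollary~\ref{cor:largelifting} supplies a $\sigma_i$-equivariant biholomorphic lift $\Psi_i$ with $\Psi_i(0)=0$, and its linearization $d(\Psi_i)_0 = \lim_{t\to 0} t^{-1}\Psi_i(tv)$ is a linear $\sigma_i$-equivariant lift since each rescaled $\Psi_{i,t}(v):=t^{-1}\Psi_i(tv)$ is still a lift of $\psi|_{Z_i}$ by quasilinearity. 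On each non-large factor, direct analysis suffices: for $r_i=1$, the quasilinear automorphisms of $\liet_i/\W_i$ preserving $D_{i,s}$ come from $N_{\GL(\liet_i)}(\W_i)$ by Chevalley and lift to $N_{\GL(\lieg_i)}(G_i)$; for $(r_i,\lieg_i)=(2,\liesl_2)$, the quasilinear automorphisms of $Z_i=\C^3$ preserving the quadric $D_i=\{xy-z^2=0\}$ form $\C^*\cdot\Orth_3$, which lifts to $\GL_2\times\Orth_3\subset N_{\GL(V_i)}(\PSL_2)$. Patching the factor-wise lifts produces the required global element of $N_{\GL(V)}(G)$.

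For (1), let $\psi\in\HAut(Z,0)$ be stratification preserving and write the weight expansion $\psi^* y_i = \sum_{k\geq 1} g_k$ with $g_k\in S_k$; deformability requires $g_k=0$ for $k<e_i$ and $g_k\in S_{e_i}\cdot S_{k-e_i}$ for $k\geq e_i$. The linear term vanishes because semisimplicity of $G$ forces $S_1=0$. For the remaining constraints I would argue by induction on the weight $k$: since $\psi$ permutes codimension-one strata, for each divisor closure $D_W = \{f_W=0\}$ there is $D_{W'} = \{f_{W'}=0\}$ with $\psi^* f_{W'} = h\cdot f_W$ for a holomorphic unit $h$, and expanding this identity in weight components yields polynomial relations among the Taylor coefficients of the various $\psi^* y_i$ that iteratively kill the forbidden cross-degree terms. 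For the large part of $Z$ (where codimension-one strata are absent), the analogous rigidity is supplied by the higher-codimension strata encoding the $G$-module structure of the slice representations, which confines the low-weight Taylor pieces to specific isotypic components. The main obstacle is precisely this inductive verification: carefully tracking how stratification preservation constrains mixed-degree Taylor coefficients, a task most delicate when distinct factors contribute generators of equal degree, leaving genuine room for mixing in $\psi^*$.
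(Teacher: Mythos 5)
Your outline for part (2) is recognizably the paper's strategy (factor the problem along the product $Z=\prod Z_i$, then handle each factor type), but the argument for part (1) is a genuinely different route and it does not close, while several steps in part (2) are asserted without the supporting argument.

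The biggest gap is in part (1), for the large factors. The large summand $V_3$ has \emph{no} codimension-one strata, so the inductive scheme you describe, where preservation of divisor closures produces relations of the form $\psi^*f_{W'}=h\cdot f_W$ that "iteratively kill the forbidden cross-degree terms," has no input in that case. You substitute the vague claim that "higher-codimension strata encoding the $G$-module structure of the slice representations" confine low-weight Taylor pieces, and you explicitly concede you cannot verify this; that concession is correct and the gap is real, not a matter of polish. The paper's route to goodness for the large part is entirely different: it shows $V_3$ is \emph{admissible} (Definition~\ref{defn:admissible}), i.e.\ $2$-principal and such that $\pi_*\colon\Diff^k(V_3)^{\widetilde G_3}\to\Diff^k(\quot{V_3}{\widetilde G_3})$ is surjective, and then invokes \cite[Theorem 2.2]{SchAutoms} to conclude goodness. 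Surjectivity of $\pi_*$ is in turn reduced (Corollary~\ref{cor:main}) to showing $V_3$ is $2$-modular, which occupies \S\S5--6 and is the bulk of the new technical work. None of this appears in your proposal, and nothing you propose replaces it.

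There are also two smaller but real gaps in part (2). First, the claim that strong stratification preservation "forces $\psi$ to respect the decomposition $Z=\prod_i Z_i$ up to permutations" is stated without justification. This is the content of Lemma~\ref{lem:together}, and it needs an argument: one distinguishes codimension-one strata coming from $2\liesl_2$ factors from those coming from $r_i=1$ factors by looking at the strata in their closures (Lemma~\ref{lem:rank2} says the latter contain codimension-two strata, the former a single point of codimension three), and one uses that the large part has no codimension-one strata at all. Second, for the $r_i=1$ factors your "by Chevalley... and lift to $N_{\GL(\lieg_i)}(G_i)$" glosses over the two nontrivial steps: lifting an automorphism of $\liet_i/\W_i$ to $N_{\GL(\liet_i)}(\W_i)$ is a theorem of Lyashko/Kriegl--Losik--Michor/Strub (not Chevalley), and the further lift from $N_{\GL(\liet_i)}(\W_i)$ to $N_{\GL(\lieg_i)}(G_i)$ fails in general: an element normalizing $\W_i$ can swap short and long roots (e.g.\ in types $B_2$, $G_2$, $F_4$) and then admits no lift. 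It is exactly the hypothesis that $\psi$ preserves $D_s$ that rules this out; that is what Theorem~\ref{thm:centralizer} uses, via the root-system analysis in Lemmas~\ref{lem:modscalar}--4.3, and your proposal does not indicate how $D_s$-preservation enters.
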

We deduce our main theorem from the one above using a result on lifting of isotopies  as in \cite{SchLifting}. Our proof of Theorem \ref{thm:mainhelp} breaks up into three parts.  Recall that  $V=\oplus_{i=1}^d r_i\lieg_i$. Let $V_1$ be the sum of the submodules $r_i\lieg_i\simeq 2\liesl_2$, let $V_2$ be the sum of the submodules $r_i\lieg_i$ with $r_i=1$ and let $V_3$ be the sum of  the $r_i\lieg_i$ which are large. Let $\widetilde G_i$ be the image of $G$ in $\GL(V_i)$, $1\leq i\leq 3$. We show that Theorem \ref{thm:mainhelp} holds for each $(V_i,\widetilde G_i)$. The first two cases are handled in \S 3 and \S 4. The third case is a bit more difficult.  

Let $W$ be an $H$-module where $H$ is reductive. Set $Y:=\quot WH$ and let $\pi\colon W\to Y$ be the quotient morphism. Let $\Diff^k(W)$ (resp.\ $\Diff^k(Y)$) denote the algebraic differential operators on $W$ (resp.\  $Y$) of order at most $k$ (see \cite[\S 3]{SchLiftingDOs}). Then we have a morphism $\pi_*\colon\Diff^k(W)^H\to\Diff^k(Y)$ where $\pi_*(P)(f)=P(\pi^*(f))$ for $P\in \Diff^k(W)^H$ and $f\in\O(Y)\simeq\O(W)^H$. 

\begin{definition}\label{defn:admissible}
We say that the $H$-module $W$ is \emph{admissible\/} if
\begin{enumerate}
\item $W$ is $2$-principal.
\item  $\pi_*\colon\Diff^k(W)^H\to\Diff^k(Y)$ is surjective for all $k$.
\end{enumerate}
\end{definition}

From \cite[Theorem 2.2]{SchAutoms} we have the following result:

\begin{theorem}
If $W$ is admissible, then $W$ is good.
\end{theorem}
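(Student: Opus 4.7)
The plan is to reduce deformability of $\psi$ to the existence of a suitably equivariant lift of $\psi$ through $\pi$, using admissibility to produce the lift.

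First, I would reformulate the deformability condition on $W$. For the $H$-invariant polynomial $p_i\in\O(W)^H$ of degree $e_i$ corresponding to $y_i$ under $\O(Y)\cong\O(W)^H$, the germ $\psi^*y_i\in\HH(Y,0)$ lies in $S_{e_i}\cdot\HH(Y,0)$ if and only if its pullback $F_i:=\pi^*(\psi^*y_i)\in\HH(W,0)^H$ vanishes at $0\in W$ to order at least $e_i$. Indeed, the weighted grading on $S=\O(Y)$ is identified with the polynomial-degree grading on $\O(W)^H$ via $\pi^*$, and the $\C^*$-action on $W$ is by scalar multiplication, so weighted and ordinary orders of vanishing coincide on $H$-invariants.

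Second, I would produce an $H$-equivariant holomorphic lift $\Psi\colon(W,0)\to(W,0)$ of $\psi$. Over the principal locus, $2$-principality makes $\pi|_{W_\pr}\colon W_\pr\to Y_\pr$ a geometric quotient by the finite principal isotropy, so a local $H$-equivariant lift of $\psi|_{Y_\pr}$ to $W_\pr$ exists. This is where admissibility enters: surjectivity of $\pi_*\colon\Diff^k(W)^H\to\Diff^k(Y)$ for all $k$ provides the coherence needed to patch these local formal lifts into a single $H$-equivariant biholomorphism on $W_\pr$ and to control its behaviour along the strata. Since $W\setminus W_\pr$ has codimension $\geq 2$ in $W$, Hartogs' theorem extends this to $\Psi\in\HAut(W,0)$, and $H$-equivariance persists by continuity of the action.

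Third, I would conclude as follows. The biholomorphism $\Psi$ fixes $0$, so $\Psi^*p_i$ has the same order of vanishing at $0$ as $p_i$, namely $e_i$. On the other hand, $H$-equivariance of $\Psi$ together with the identification $\O(W)^H\cong\O(Y)$ shows that $\Psi^*p_i$ agrees with $F_i$ as an $H$-invariant germ, so $F_i$ vanishes at $0$ to order $\geq e_i$, giving the desired $\psi^*y_i\in S_{e_i}\cdot\HH(Y,0)$.

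The main obstacle is the lifting step: producing the global $H$-equivariant biholomorphism $\Psi$ from the stratification-preserving $\psi$. Two-principality alone does not suffice, and the surjectivity of $\pi_*$ on differential operators of all orders is the precise hypothesis that makes local lifts compatible and the gluing obstructions vanish. Accepting the lift, the deformability conclusion follows immediately from the observation that an $H$-equivariant biholomorphism fixing $0$ cannot change the order of vanishing of a homogeneous invariant.
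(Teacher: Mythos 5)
Your steps (1) and (3) are essentially fine: deformability of $\psi$ is indeed equivalent to each $F_i=\pi^*(\psi^*y_i)$ vanishing to order at least $e_i$ at $0\in W$, and once one has \emph{any} holomorphic lift germ $\Psi\colon (W,0)\to (W,0)$ with $\pi\circ\Psi=\psi\circ\pi$ and $\Psi(0)=0$, the estimate $\mathrm{ord}_0(p_i\circ\Psi)\geq e_i$ is immediate (you do not even need $\Psi$ to be biholomorphic or equivariant for that). The genuine gap is step (2): the lift $\Psi$ is never actually constructed. Over $Y_\pr$ the map $\pi$ is an $H$-fiber bundle with fiber $H/\Gamma$, $\Gamma$ finite, so local lifts of $\psi|_{Y_\pr}$ exist, but gluing them into a single equivariant lift is a nonabelian descent problem (a cocycle in the automorphisms of the fiber), and the sentence ``surjectivity of $\pi_*\colon\Diff^k(W)^H\to\Diff^k(Y)$ provides the coherence needed to patch'' is not an argument: that surjectivity is a statement about linear differential operators, and no mechanism is given that converts it into a patching statement for lifts of automorphisms. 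There is also a bookkeeping issue (the germ of $\psi$ at $0\in Y$ corresponds to a saturated neighborhood of the whole null cone $\pi\inv(0)$, not of $0\in W$, so the map must be produced there before Hartogs extension can be invoked), but the patching step is the real failure.

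Beyond the missing mechanism, the strategy is essentially circular relative to the logic of the paper: the existence of a ($\sigma$-equivariant) holomorphic lift of an arbitrary strongly stratification preserving $\psi\in\HAut(Y,0)$ is (the germ version of) the main theorem being proved, and in the paper it is \emph{deduced from} goodness --- one first proves deformability, lifts the quasilinear limit $\psi_0$ linearly, and then lifts the isotopy $\psi_t\circ\psi_0\inv$ by integrating lifted invariant vector fields. So assuming the lift in order to prove goodness inverts the intended order of deduction. The paper itself does not reprove the statement at hand; it quotes \cite[Theorem 2.2]{SchAutoms}, whose proof uses admissibility directly: surjectivity of $\pi_*$ on $\Diff^k$ for every $k$ lets one detect the weighted filtration on $\HH(Y,0)$ (the one given by the order of vanishing of $\pi^*f$ at $0\in W$) by means of the order filtration on differential operators of $Y$, and since conjugation by $\psi^*$ preserves the order of (germs of) differential operators, $\psi^*$ preserves this filtration, which is precisely the condition $\psi^*y_i\in S_{e_i}\cdot\HH(Y,0)$; no lift of $\psi$ is required. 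To repair your proposal you would either have to give a genuine proof of the lifting step (which is harder than the statement itself and unavailable at this point) or switch to a direct differential-operator argument of this kind.
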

\noindent (\cite[Theorem 2.2]{SchAutoms} is actually stated for $\HAut(Y)$, but the proof for $\HAut(Y,0)$ is the same.)
In sections \S 5 and \S 6 we show that $(V_3,\widetilde G_3)$ is admissible, hence good. From \cite[1.9, 1.16]{SchAutoms} we see that $V_3$ has the lifting property. Thus each $(V_i,\widetilde G_i)$ satisfies the conclusions of Theorem \ref{thm:mainhelp}. In \S 7 we combine these facts to prove Theorems \ref{thm:main} and \ref{thm:mainhelp}. In \S 8 we consider the analogues of our results for compact Lie groups.

\smallskip\noindent
\textit{Acknowledgement.}  We thank the referee for a careful reading of the manuscript and for comments improving the exposition.

\section{Preserving the stratification}
Let  $H$ be reductive and $W$ an  $H$-module. Let $Y=\quot WH$. For $(L)$ and $(M)$ conjugacy classes of subgroups of $H$ we write $(M)\geq (L)$ if $L$ is conjugate to a subgroup of $M$. From  \cite[Lemma 5.5]{SchLifting} one has

\begin{lemma}\label{lem:closure}
Let $S=Y_{(L)}$ be a stratum of $Y$. Then  $S$ is irreducible and
$$
\overline{S}=\bigcup_{(M)\geq (L)} Y_{(M)} 
$$
\end{lemma}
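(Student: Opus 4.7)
My plan is to argue in two parts, using Luna's \'etale slice theorem as the central tool.

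For irreducibility of $S = Y_{(L)}$, I would pick a representative $L$ of the conjugacy class and consider the fixed-point subspace $W^L \subseteq W$, which is a linear (hence irreducible) subspace stable under $N_H(L)$. Let $W^L_\circ \subseteq W^L$ be the complement of the finite union of proper linear subspaces $W^{L'}$, where $L \subsetneq L'$ ranges over the finitely many isotropy types strictly containing $L$. Then $W^L_\circ$ is a Zariski-open, hence irreducible, subset consisting of those $w \in W^L$ whose $H$-stabilizer is exactly $L$. The Luna--Richardson correspondence says that every closed $H$-orbit with isotropy conjugate to $L$ meets $W^L_\circ$, and $\pi$ factors through an identification $W^L_\circ /\!/ N_H(L) \cong Y_{(L)}$. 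Irreducibility of $W^L_\circ$ then transfers to $Y_{(L)}$.

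For the inclusion $\overline{S} \subseteq \bigcup_{(M) \geq (L)} Y_{(M)}$, I would invoke upper semicontinuity of stabilizers in limits. If $z_n \in Y_{(L)}$ converges to $z \in Y_{(M)}$, choose closed-orbit representatives $w_n \in W$ with a common $H_{w_n} = L$; after passing to a subsequence and using a Hilbert--Mumford-type adjustment by the $H$-action to land on the closed orbit over $z$, one obtains a limit $w$ over $z$ whose stabilizer $M$ contains a conjugate of $L$, i.e., $(M) \geq (L)$.

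For the reverse inclusion, given $(M) \geq (L)$ and $z \in Y_{(M)}$, take a closed-orbit representative $w$ with $H_w = M$ and apply the slice theorem to model an \'etale neighborhood of $z$ in $Y$ on an open neighborhood of $0$ in $N_w /\!/ M$, where $N_w$ is the slice representation. The strata near $z$ correspond bijectively to the strata of $N_w /\!/ M$ near $0$, indexed by $M$-conjugacy classes of subgroups of $M$ and translated into $H$-conjugacy via $M \hookrightarrow H$. Since $N_w$ is a linear $M$-module, every stratum of $N_w /\!/ M$ is $\C^*$-invariant under the scaling action, so its closure contains $0$; hence $z$ lies in $\overline{Y_{(L')}}$ for every isotropy type $(L')$ occurring in the slice.

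The main obstacle is the last verification: one must show that $(L)$ actually occurs as an isotropy of the $M$-action on $N_w$ whenever $(M) \geq (L)$ and $Y_{(L)}$ is nonempty. I would handle this by conjugating so that $L \subseteq M$ and exhibiting a point of $N_w^L$ whose $M$-stabilizer is exactly $L$: the subspace $N_w^L \subseteq N_w$ is nonzero (as $L \subseteq M$ acts on $N_w$), and strictly larger stabilizers cut out a finite union of proper subspaces of $N_w^L$, so a generic vector realizes the desired isotropy. This genericity step, together with the global existence of an $H$-closed orbit of isotropy $L$ to anchor the conjugacy, is the delicate point where one really uses that $W$ is a linear $H$-module rather than a general $G$-variety.
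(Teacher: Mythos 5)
The serious gap is in your reverse inclusion, at exactly the point you flag. To conclude that $z\in Y_{(M)}$ lies in $\overline{Y_{(L)}}$ you must show that $(L)$ occurs as the isotropy class of a \emph{closed} $M$-orbit in the slice representation $N_w$, and your proposed genericity argument does not do this. The locus of points of $N_w^L$ with $M$-stabilizer strictly larger than $L$ is not a finite union of proper subspaces: there are infinitely many subgroups $L'\supsetneq L$ (only finitely many up to conjugacy), and, worse, the subspaces $N_w^{L'}\cap N_w^L$ need only be proper if you already know some point of $N_w^L$ has stabilizer exactly $L$ --- which is what you are trying to prove, so the argument is circular. Concretely, take $M=\PSL_2$ acting on $N_w=\liesl_2$ and $L$ the order-two subgroup generated by the image of $\diag(i,-i)$: then $N_w^L$ is the Cartan line and \emph{every} nonzero vector in it has stabilizer the full maximal torus, strictly containing $L$. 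Your argument uses the hypothesis that $(L)$ is a closed-orbit isotropy class of $(W,H)$ only to ``anchor the conjugacy'' $L\subseteq M$, and since that hypothesis holds vacuously in local situations where the conclusion fails, no purely local genericity claim can close the gap. This occurrence-in-the-slice statement is the real content of the lemma; the paper does not reprove it but cites \cite[Lemma 5.5]{SchLifting}, where it is established (in the corresponding compact/orthogonal picture, which transfers to the reductive setting) by an actual induction, not by genericity in $N_w^L$.

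There are also two smaller problems. In the irreducibility step, your set $W^L_\circ$ (stabilizer exactly $L$, with no condition on the orbit) is the wrong set: it contains points with non-closed $H$-orbits, and such points map under $\pi$ into other strata, so $\pi$ does not even map $W^L_\circ$ into $Y_{(L)}$ and the asserted identification $\quot{W^L_\circ}{N_H(L)}\cong Y_{(L)}$ fails as stated; one must work with $\{w\in W^L: Hw \text{ closed},\ H_w=L\}$ (using Luna's criterion that for $w\in W^L$ the orbit $Hw$ is closed iff $N_H(L)w$ is), and note that in the reductive setting the exact-isotropy locus need not even be open in $W^L$ (e.g.\ $\Orth_2(\C)$ on $\C^2$: stabilizers jump \emph{down} at isotropic vectors), so the ``complement of finitely many proper subspaces'' description fails here too. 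Finally, in the inclusion $\overline S\subseteq\bigcup_{(M)\geq(L)}Y_{(M)}$, the limiting argument needs care because the stabilizers $H_{w_n}$ are conjugates of $L$ by group elements that may escape to infinity in the non-compact group $H$; the clean fix is the slice theorem at the closed orbit over the limit point (every isotropy group in a saturated neighborhood is subconjugate to $M$), or passage to the Kempf--Ness set, rather than a direct compactness argument.
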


Let $\X(Y)$ denote the strata preserving vector fields on $Y$. These are the derivations of $\O(Y)$ which are tangent to $S$ along $S$ for every stratum $S$ of $Y$. Equivalently, they are the derivations preserving the ideals vanishing on the closures of the strata of $Y$. Let $\X_h(Y)$ denote the holomorphic strata preserving vector fields on $Y$ and let $\X_h(Y,0)$ denote the corresponding germs at $0$. Let $\X(W)$ denote the vector fields on $W$. Then it is easy to see that any element of $\X(W)^H$ preserves the pull-backs of the ideals vanishing on the closures of the strata of $Z$ \cite[Corollary 1.3]{SchLifting}. Thus  we have a natural morphism $\pi_*\colon \X(W)^H\to\X(Y)$.
We also have $\pi_*\colon\X_h(W)^H\to\X_h(Y)$ and similarly for $\X_h(W,0)^H$ and $\X_h(Y,0)$.  
Now we assume that $W$ is orthogonal.

\begin{proposition}\label{prop:codimone}
Let  $H$ be reductive and $W$ an orthogonal $H$-module.  
\begin{enumerate}
\item  Let $A$ be a derivation of $\O(Y)$ which preserves the codimension one strata of $Y$. Then $A$ preserves all the strata of $Y$. The same result holds for  holomorphic derivations of $\HH(Y)$ and germs of holomorphic derivations  at $0\in Y$ \cite[3.5, 5.8, 6.1]{SchLifting}.
\item Let $A\in\X(Y)$ $($resp. $\X_h(Y)$, resp.\ $\X_h(Y,0))$. Then there is a $B\in \X(W)^H$ $($resp.\ $\X_h(W)^H$, resp.\ $\X_h(W,0)^H)$ such that $\pi_*B=A$ \cite[3.7, 6.7, 6.9]{SchLifting}.
\end{enumerate}
\end{proposition}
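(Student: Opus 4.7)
The plan is to address parts (1) and (2) independently, each by reducing the global statement to a local Luna-slice analysis and inducting on $\dim W$.

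For (1), I would prove by induction on the partial order $(M)\geq (L)$ from Lemma \ref{lem:closure} that $A$ preserves the ideal of $\overline{Y_{(L)}}$. The codimension-one case is the hypothesis. For a stratum $Y_{(L)}$ of codimension $\geq 2$, pick a smooth point $y\in Y_{(L)}$ and a closed orbit $Hx$ mapping to $y$. Since $W$ carries an $H$-invariant non-degenerate symmetric form and $H_x$ is reductive, the slice representation $N_x$ inherits an $H_x$-invariant form, so $N_x$ is again orthogonal, of strictly smaller dimension. Luna's slice theorem supplies an étale neighborhood of $y$ in $Y$ isomorphic to $Y_{(L)}\times \quot{N_x}{H_x}$, with strata of $Y$ meeting this chart corresponding to $Y_{(L)}\times(\quot{N_x}{H_x})_{(K)}$. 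Under this identification, codimension-one strata through $y$ correspond to codimension-one strata of $\quot{N_x}{H_x}$, so the derivation of $\O(\quot{N_x}{H_x})$ induced by $A$ preserves those codimension-one strata. The inductive hypothesis applied to $N_x$ then shows it preserves every stratum of $\quot{N_x}{H_x}$, and in particular the one corresponding to $Y_{(L)}$. The same reasoning transfers verbatim to the holomorphic and germ-at-$0$ settings.

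For (2), the key tool afforded by orthogonality is the gradient operator: for $f\in\O(W)^H$, $\grad f\in\X(W)^H$, and $\pi_*\grad f$ lies in $\X(Y)$. I would proceed in two stages. First, at each $y=\pi(Hx)\in Y$ use Luna to identify a neighborhood of $y$ with an étale piece of $\quot{N_x}{H_x}$; the surjectivity statement for $\pi_*\colon\X(W)^H\to\X(Y)$ at $y$ then reduces to showing surjectivity of the corresponding map for the orthogonal $H_x$-module $N_x$ at the origin, an inductive problem in $\dim W$ with base case the principal stratum, where the map is literally a localization of $\O(W)^H$-modules and surjectivity is immediate. Second, patch the local lifts: the presheaf on $Y$ whose sections over $U$ are the $H$-invariant vector fields on $\pi^{-1}(U)$ is coherent, and the gluing obstruction lies in an $H^1$ that vanishes by Cartan's Theorem B in the analytic and germ settings, and by an affine-cover argument combined with the finite generation of $\X(W)^H$ over $\O(W)^H$ in the algebraic setting.

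The main obstacle will be the inductive step in (2): verifying that every strata-preserving vector field on the slice quotient $\quot{N_x}{H_x}$ actually admits an $H_x$-invariant lift to $N_x$. Concretely one needs a description of $\X(\quot{N_x}{H_x})$ as an $\O(\quot{N_x}{H_x})$-module generated by projections of gradients $\grad f_j$ of generators of $\O(N_x)^{H_x}$, together with additional tangential generators coming from $\X(N_x)^{H_x}$. Establishing this finite-generation-with-specified-generators statement is where orthogonality of $N_x$ is used essentially, and it is also what makes the local-to-global patching step go through; this would be the technical heart of the argument and is precisely what the cited \cite[3.7, 6.7, 6.9]{SchLifting} carries out in detail.
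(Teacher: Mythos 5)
You should first note what the paper itself does here: Proposition \ref{prop:codimone} is not proved in this paper at all; it is imported verbatim from \cite{SchLifting}, with the bracketed numbers pointing to the precise statements there. So the only question is whether your sketch would independently establish it, and it would not, because the Luna-slice induction is empty exactly where the proposition has its content. The slice theorem only yields a dimension drop at a point $y=\pi(Hx)$ whose closed orbit $Hx$ has positive dimension. At closed orbits that are fixed by the identity component of $H$ --- in particular at $0\in W$, and more generally along the strata lying in the image of the fixed-point space --- one has $N_x\simeq W$ as a module, so neither your base case nor your inductive step covers these points. This is fatal in both parts. In part (1), the strata that cause trouble are precisely these: tangency of $A$ to, say, the point stratum $\{\pi(0)\}$ (when $W^H=0$) does not follow formally from $A$ preserving the ideals of the codimension-one strata closures, since the differentials at $\pi(0)$ of functions in those ideals need not span the cotangent space (already for a single quadric discriminant $\{f=0\}$, the condition $A f\in(f)$ puts no constraint on the constant term of $A$ at $0$); some genuinely global/graded input is needed, and your induction never reaches these strata. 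In part (2) the problem is even starker: the versions for $\X_h(Y,0)$ and $\X_h(W,0)^H$ \emph{are} the local statement at $0$, and there the slice reduction says nothing, so the ``inductive problem in $\dim W$ with base case the principal stratum'' never gets at the singular point that matters. (A smaller issue: even where the slice argument does apply, the induced derivation on $\quot{N_x}{H_x}$ is only an analytic germ, so the algebraic case of (1) must be routed through the holomorphic/germ cases rather than ``transferring verbatim''.)

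Your closing paragraph in effect concedes this: the ``finite-generation-with-specified-generators'' description of the strata-preserving fields on the slice quotient, with invariant lifts, is essentially the proposition itself at the slice level, and you defer it to the very results \cite[3.7, 6.7, 6.9]{SchLifting} being quoted --- which makes the argument circular as a proof. The parts of your plan that are genuinely routine (the torsor-under-vertical-fields patching, Cartan B or affine vanishing for the quasi-coherent kernel sheaf, Reynolds averaging over the principal stratum) are not where the difficulty lies; the difficulty is local existence of invariant lifts at the most singular closed orbits of an orthogonal module, and orthogonality enters the actual proofs in \cite{SchLifting} far more substantially than by merely supplying gradient fields. For the purposes of this paper the correct move is the one the author makes: cite \cite{SchLifting} and do not attempt to reprove the statement by slice induction alone.
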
 

\begin{corollary}\label{cor:preservestrata}
Let $\psi\in\Aut(Y)$ such that $\psi$ preserves the union of the codimension one strata. Then $\psi$ preserves the stratification of $Y$.
\end{corollary}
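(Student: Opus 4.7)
The plan is to convert the hypothesis about the automorphism $\psi$ into one about derivations, to which Proposition \ref{prop:codimone}(1) applies.

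First, by Lemma \ref{lem:closure} every stratum closure is irreducible, so the closures of the codimension one strata are exactly the irreducible components of $\overline{Y^{(1)}}$, where $Y^{(1)}$ denotes the union of the codimension one strata. Since $\psi\in\Aut(Y)$ preserves $Y^{(1)}$, it permutes these components, hence permutes the codimension one strata themselves.

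Next, for $A\in\X(Y)$ I form the conjugate derivation $\psi_*A$ defined by $(\psi_*A)(f) := A(\psi^*f)\circ\psi^{-1}$, and claim it is again strata preserving. By Proposition \ref{prop:codimone}(1), it suffices to check this on each codimension one stratum closure $\overline{S}$. Set $\overline{S'}:=\psi^{-1}(\overline{S})$, which is again such a closure by the previous step. If $f$ vanishes on $\overline{S}$, then $\psi^*f$ vanishes on $\overline{S'}$, hence $A(\psi^*f)$ vanishes on $\overline{S'}$ by strata preservation of $A$, and composing with $\psi^{-1}$ shows that $(\psi_*A)(f)$ vanishes on $\overline{S}$. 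The same argument applies without change to $\X_h(Y)$ and to germs.

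Finally, I use Proposition \ref{prop:codimone}(2) to conclude. The key point is that at each $y\in Y$ the evaluations $\{A(y):A\in\X(Y)\}$ fill out $T_yS$, where $S$ is the stratum through $y$: writing $y=\pi(w)$ with $Hw$ closed and $L=H_w$, any vector in $T_w(W)^L$ extends by the Luna slice theorem to an $H$-invariant algebraic vector field on $W$, whose pushforward via Proposition \ref{prop:codimone}(2) realizes any prescribed vector in $T_yS$. Combined with the previous step, $d\psi_y$ carries $T_yS$ into $T_{\psi(y)}S''$ for $S''$ the stratum through $\psi(y)$; running the same argument with $\psi^{-1}$ gives an isomorphism, so $\dim S=\dim S''$, and connectedness of strata forces $\psi(S)=S''$. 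The main obstacle is the spanning property just invoked; the rest is a purely formal manipulation with ideals and derivations.
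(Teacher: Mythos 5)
Your proposal is correct and follows essentially the paper's own route: conjugate strata preserving vector fields by $\psi$, use Proposition \ref{prop:codimone}(1) to see that the conjugate again lies in $\X(Y)$ (you make explicit the check on codimension one stratum closures that the paper leaves implicit), and conclude from the fact that the evaluations of $\X(Y)$ at a point are exactly the tangent space of the stratum—which the paper simply cites as \cite[Proposition 2.2]{SchVectorFields} and you re-sketch via the slice theorem—together with a dimension count and irreducibility of the strata. One cosmetic slip: the pushforward $\pi_*\colon\X(W)^H\to\X(Y)$ comes from the discussion preceding Proposition \ref{prop:codimone} (not from its part (2), which is the lifting statement), but this does not affect the argument.
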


\begin{proof}
 We follow the argument of \cite{SchVectorFields}.
Let $A\in\X(Y)$ and define $\psi^*(A)=\psi^*\circ A\circ(\psi^*)\inv$. This is a derivation of $\O(Y)$ which, by Proposition \ref{prop:codimone}, again lies in $\X(Y)$. Thus $\psi^*$ preserves $\X(Y)$. One computes that, in terms of tangent vectors, $(\psi^*A)(y)=d(\psi\inv)_{\psi(y)}A(\psi(y))$, $y\in Y$.  Let $s\in S$ where $S$ is a  stratum of $Y$.  Then the evaluation at $s$ of all elements of $\pi_*\X(W)^H=\X(Y)$ is precisely $T_sS$. This is established in  \cite[Proposition 2.2]{SchVectorFields}. Now the dimension of $\X(Y)=\psi^*\X(Y)$ evaluated at $s$ is  the same as the dimension of $\X(Y)$ evaluated at $\psi(s)$. Hence the stratum $S'$ containing $\psi(s)$ has the same dimension as $S$. Since there are finitely many strata and the strata are irreducible, we see that $\psi(S)=S'$. Hence $\psi$ permutes the strata of $Y$.
\end{proof}

\begin{remark}\label{rem:morecases}
The same argument works to show that if $\psi\in\HAut(Y)$ or $\HAut(Y,0)$ and $\psi$ preserves the union of the codimension one strata, then it preserves the stratification.
\end{remark}

\section{Copies of $2\liesl_2$} 
 
Let $(W_i,H_i)=(2\liesl_2,\PSL_2)\simeq(2\C^3,\SO_3)$, $i=1,\dots,n$. Let  $W=\oplus W_i$ have the product action of $H=\prod_i H_i$ with quotient $Y$.
Let $Y_i$ denote $\quot {W_i}{H_i}$. Then $Y=Y_1\times\dots\times Y_n$ where each $Y_i$ is isomorphic to $\C^3$, and the non principal strata of $Y_i$ are the nonzero points in a cone $\Var(xy-z^2)$ (denoted $S_i$) and the origin of $\C^3$. Let $S_i'$ denote the product of $S_i$ and the principal strata of the other $Y_j$. The $S_i'$ are the codimension one strata of $Y$. 

\begin{proposition}\label{prop:sl2good}
The $H$-module $W$ is good and has the lifting property.
\end{proposition}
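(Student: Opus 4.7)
The plan is to verify the two assertions separately, exploiting that $\O(Y)$ is generated in a single degree. Writing $W_i = \C^2 \otimes \C^3$ with $H_i = \SO_3$ acting on $\C^3$, the three quadratic invariants $p_{i,1} = \langle u_i, u_i\rangle$, $p_{i,2} = \langle u_i, v_i\rangle$, $p_{i,3} = \langle v_i, v_i\rangle$ identify $Y_i \simeq \C^3$, so that the $\C^*$-action on $Y \simeq \C^{3n}$ has uniform weight $e_i = 2$. The closures of the codimension-one strata are the irreducible quadric hypersurfaces $\overline{S_i'} = \Var(q_i)$, where $q_i = p_{i,1}p_{i,3} - p_{i,2}^2$.

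\emph{Goodness.} For any $\psi \in \HAut(Y,0)$ the deformability condition $\psi^* y_j \in S_{e_j} \cdot \HH(Y,0)$ becomes, under $e_j = 2$, simply $\psi^* y_j \in S_2 \cdot \HH(Y,0) = \lie m_0$ (the maximal ideal at $0$), and this is automatic because $\psi(0) = 0$. So every $\psi \in \HAut(Y,0)$ is deformable, with $\psi_0$ the differential $d\psi|_0$; in particular $W$ is good, and the strata-preservation hypothesis does not even enter.

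\emph{Lifting property, reduction to block form.} Let $\psi \in \Aut_\ql(Y)$ be strongly stratification preserving, so $\psi(D) = D$ (note $D_s = \emptyset$ here). Quasilinearity with uniform weight forces $\psi$ to be linear. Since $\psi$ permutes the irreducible components $\Var(q_i)$ of $D$, there exist $\tau \in S_n$ and scalars $c_i \in \C^*$ with $\psi^* q_{\tau(i)} = c_i q_i$. The associated bilinear form $B_i$ on $Y$ has kernel exactly $\oplus_{j \neq i} Y_j$ by non-degeneracy of $q_i$ on $Y_i$, so the identity $\psi^* B_{\tau(i)} = c_i B_i$ yields $\psi^{-1}(\oplus_{j \neq \tau(i)} Y_j) = \oplus_{j \neq i} Y_j$; intersecting over $i' \neq i$ gives $\psi(Y_i) = Y_{\tau(i)}$. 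Thus $\psi$ is block-diagonal relative to $\tau$, with each block $\tilde\psi_i\colon Y_i \to Y_{\tau(i)}$ a similitude of the $3$-dimensional non-degenerate form $q_i$.

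\emph{Lifting each block.} Over $\C$ in dimension $3$ the similitude group $\operatorname{GO}_3 = \GL_1 \cdot \SO_3$ is connected (since $-I$ is a scalar, $\Orth_3 \cdot \GL_1 = \SO_3 \cdot \GL_1$), and it coincides with the image of the natural map $\operatorname{Sym}^2 \colon \GL_2(\C) \to \GL_3(\C)$ (the adjoint cover $\SL_2 \twoheadrightarrow \SO_3$ combined with scalars). Hence each $\tilde\psi_i$ lifts to some $\Psi_i \in \GL_2(\C)$ acting on $W_i = \C^2 \otimes \C^3$ through the $\C^2$ factor, automatically commuting with $H_i$. Combining the $\Psi_i$ with the permutation of the summands $W_i$ that implements $\tau$ produces $\Psi \in N_{\GL(W)}(H)$ that is $\sigma$-equivariant for $\sigma = \tau \in S_n \subset \Aut(H)$ and satisfies $\pi \circ \Psi = \psi \circ \pi$. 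The main obstacle is the block-diagonal reduction above; once that is in hand, the existence of the linear lift reduces to the classical surjection $\operatorname{Sym}^2 \colon \GL_2(\C) \twoheadrightarrow \operatorname{GO}_3(\C)$ in dimension $3$.
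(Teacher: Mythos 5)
Your proof is correct and takes essentially the same route as the paper: goodness is immediate because the invariants are quadratic (so every germ fixing $0$ is deformable with $\psi_0=d\psi(0)$), and a quasilinear strata-preserving $\psi$ is, up to a permutation of the factors, block-diagonal with each block a similitude of $xy-z^2$, hence in the image of $\GL_2$ acting on $\liesl_2\otimes\C^2$. The only cosmetic difference is that you obtain the block structure from the radicals of the bilinear forms $B_i$, whereas the paper intersects the singular loci of the closures $\overline{S_i'}$.
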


\begin{proof}
Let $\psi\in\HAut(Y)$ preserve the   codimension one strata.   Then $\psi$ preserves the stratification. The point $0$ is the lowest dimensional stratum, hence it is preserved. Since each $\O(W_i)^{H_i}$ is generated by quadratic invariants, $\psi_0$ exists and is just $d\psi(0)$, considered as a mapping of $Y\simeq T_0(Y)$ to itself.  The same argument works in case that $\psi\in\HAut(Y,0)$. Hence $W$ is good.

Now let $\psi$ be quasilinear and strata preserving. Then $\psi$ permutes the  $S_i'$ and modulo a permutation of the $W_i$ (which induces an automorphism $\sigma$ of $H$), we may reduce to the case that $\psi$ preserves each $ S_i'$. Now $\overline{S_i'}$  is the product of $\overline{S_i}$ with the other $Y_j$. The singular points of this space are the product of $0\in Y_i$ with the product of the other $Y_j$. By taking intersections we see that $\psi$ preserves the product of $Y_i$ with the origin of the other $Y_j$. Thus  $\psi=\diag(A_1,\dots,A_n)$ where $A_i\in\GL(3)$ preserves the cone $\overline{S_i}$, $i=1,\dots,n$. But then $A_i$ is in $\C^*$ times the orthogonal group of $xy-z^2$. This is the image of the group $GL_2$ which acts on $2\liesl_2\simeq\liesl_2\otimes\C^2$ via its action on $\C^2$, commuting with the action of   $\PSL_2$. Hence $\psi$ lifts and $W$ has the lifting property.
\end{proof}
 
 \section{The adjoint representation}
 
We now consider the case where $(W,H)=(\lieh,H)$ and $H$ is semisimple.   We have $\lieh=\oplus_{i=1}^n\lieh_i$ where $\lieh_i$ is simple. Let $H_i$ denote the corresponding adjoint group so that  $H=\prod_i H_i$. Let $\liet_i$ be a maximal toral subalgebra of $\lieh_i$ and let $\W_i$ be the corresponding Weyl group. Set $\liet=\oplus_i\liet_i$ and $\W=\prod_i \W_i$. Let $Y_i$ denote $\quot {\lieh_i}{H_i}\simeq\liet_i/\W_i$. Define $D_s$ and $D_\ell$ as in the introduction.  
  
Let $\psi\in\HAut(Y,0)$ preserve  $D_s$ and $D_\ell$.   First we only use the information that $\psi$ preserves $D_s\cup D_\ell$.  
Let $\tilde\pi$ denote the quotient mapping $\liet\to Y=Y_1\times\dots\times Y_n$.

\begin{proposition}
Let $\psi$ be as above. Then 
\begin{enumerate}
\item $\psi$ is stratification preserving. In particular, $\psi(Y_\pr)\subset Y_\pr$. 
\item $\psi$ has a lift $\tau\in\HAut(\liet,0)$ which is $\sigma$-equivariant for some $\sigma\in\Aut(\W)$.
\item $\psi$ is deformable and $\psi_0$ lifts to $\tau'(0)\in N_{\GL(\liet)}(\W)$.
\item $W$ is good.
\end{enumerate}
\end{proposition}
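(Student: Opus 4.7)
\textbf{The plan} is to push the problem from the singular quotient $Y=\liet/\W$ up to the smooth cover $\liet$, where everything becomes linear. For Part (1), $D_s\cup D_\ell$ is by construction the union of closures of the codimension-one strata of $Y$ (these strata correspond to $\W$-orbits of reflections, separated by root length). Preservation of this union triggers Corollary \ref{cor:preservestrata} together with Remark \ref{rem:morecases}, giving stratification preservation; the open stratum $Y_\pr$ is fixed setwise since it is the unique open stratum.

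For Part (2), the plan is to exploit that $\tilde\pi\colon \liet\to Y$ is the Chevalley quotient, a finite $\W$-Galois morphism ramified exactly along the reflection hyperplanes $H_\alpha$, with discriminant image $D_s\cup D_\ell$. The restriction $\liet_\pr\to Y_\pr$ is an unramified Galois $\W$-cover, and I would lift $\psi|_{Y_\pr}$ to a biholomorphism $\tau\colon \liet_\pr\to\liet_\pr$ by standard covering-space theory, after choosing a base point and a point in its fiber. Such a lift is automatically $\sigma$-equivariant for some $\sigma\in\Aut(\W)$, where $\sigma$ records how $\psi_\ast$ conjugates the deck group. \textbf{The main obstacle} is extending $\tau$ across the reflection hyperplanes, which form a codimension-one subset where Hartogs does not directly apply. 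I would instead invoke Riemann's extension theorem: the identity $\tilde\pi\circ\tau=\psi\circ\tilde\pi$ together with the properness of $\tilde\pi$ forces $\tau(v)$ to accumulate in the finite fiber $\tilde\pi\inv(\psi(\tilde\pi(v_0)))$ as $v\to v_0\in H_\alpha$, so $\tau$ is locally bounded near each smooth point of $H_\alpha$ and extends holomorphically; the remaining codimension-two locus in $\bigcup H_\alpha$ is then handled by ordinary Hartogs. Applying the same argument to $\psi\inv$ produces a two-sided holomorphic inverse, giving $\tau\in\HAut(\liet,0)$.

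Parts (3) and (4) follow quickly. Since $\liet$ is a vector space and $\C^*$ acts by scalar multiplication, the Taylor expansion of $\tau$ at the origin yields $\tau_t(v)=t\inv\tau(tv)\to \tau'(0)v$ as $t\to 0$, so $\tau$ is deformable with $\tau_0=\tau'(0)\in\GL(\liet)$; $\sigma$-equivariance of $\tau$ is inherited by this limit, placing $\tau'(0)$ in $N_{\GL(\liet)}(\W)$. Because $\tilde\pi$ intertwines the $\C^*$-actions (the generators of $\O(Y)$ being homogeneous invariants), the relation $\psi_t\circ\tilde\pi=\tilde\pi\circ\tau_t$ holds and survives the limit $t\to 0$, giving a well-defined $\psi_0=\tilde\pi\circ\tau'(0)\circ\tilde\pi\inv$ on $Y=\liet/\W$ because $\tau'(0)$ normalizes $\W$. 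For Part (4), any stratification-preserving $\psi\in\HAut(Y,0)$ must permute the codimension-one strata and hence preserve their union $D_s\cup D_\ell$; since Parts (1)--(3) used only preservation of this union (not of $D_s$ and $D_\ell$ individually), such $\psi$ is automatically deformable, proving that $W$ is good.
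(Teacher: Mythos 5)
Your parts (1), (3) and (4) follow the same route as the paper: (1) is Corollary \ref{cor:preservestrata} together with Remark \ref{rem:morecases}, and once a $\sigma$-equivariant germ $\tau$ over $\tilde\pi$ is in hand, the computation $\lim_{t\to0}\psi_t(\tilde\pi(x))=\tilde\pi(\lim_{t\to0}t\inv\tau(tx))=\tilde\pi(\tau'(0)x)$ gives deformability, the lift $\tau'(0)\in N_{\GL(\liet)}(\W)$, and goodness, exactly as in the paper (which also only uses preservation of $D_s\cup D_\ell$ for these parts). The difference is part (2): the paper does not prove the lifting statement but cites it (Lyashko, Kriegl--Losik--Michor Theorem 5.4, or \cite[Theorem 3.1]{SchIsoPreserveInvars}), whereas you attempt a direct proof via the covering $\liet_\pr\to Y_\pr$.

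In that attempt there is a genuine gap at the central step. The covering $\liet_\pr\to Y_\pr$ is \emph{not} the universal cover: its deck group is $\W$, but $\pi_1(\liet_\pr)$ is the (local) pure braid group, which is nontrivial. So ``standard covering-space theory, after choosing a base point and a point in its fiber'' does not produce a lift of $\psi|_{Y_\pr}$; the lifting criterion demands that $(\psi\circ\tilde\pi)_*\pi_1(\liet_\pr)\subseteq\tilde\pi_*\pi_1(\liet_\pr)$, i.e., that the automorphism of the local braid group $\pi_1(U\cap Y_\pr)$ induced by $\psi$ preserve the kernel of the surjection onto $\W$. A normal subgroup need not be preserved by an arbitrary automorphism, and verifying this invariance is precisely the nontrivial content of the theorems the paper cites. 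One way to close the gap along your lines: since $\psi$ preserves $D_s\cup D_\ell$, it sends a meridian of the discriminant at a smooth point to a conjugate of a meridian or its inverse; the kernel of $\pi_1(U\cap Y_\pr)\to\W$ is the normal closure of the squares of the meridional generators (adding the relations $\sigma_\alpha^2=1$ yields $\W$), hence it is preserved, and the lift exists. With that step supplied, the rest of your argument is sound: finiteness of $\tilde\pi$ gives local boundedness of the lift near the hyperplanes, so Riemann's extension theorem applies across the whole arrangement (the separate Hartogs step in codimension two is not even needed), the same argument for $\psi\inv$ gives invertibility, and $\sigma$-equivariance follows since $\tau\circ w\circ\tau\inv$ is a deck transformation for each $w\in\W$.
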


\begin{proof}
Part (1) follows from Corollary \ref{cor:preservestrata}. Then   \cite{Lyashko}, \cite[Theorem 5.4]{KrieglTensor} or \cite[Theorem 3.1]{SchIsoPreserveInvars} show that $\psi$ has a $\sigma$-equivariant lift $\tau$ to $\liet$, giving (2). Clearly $\tau(0)=0$. Let $x\in\liet$ and set  $y=\tilde\pi(x)$. Then  $\lim_{t\to 0}\psi_t(y)=\tilde\pi(\lim_{t\to 0}t\inv\tau(tx))=\tilde\pi(\tau'(0)x)$. Thus $\psi$ is deformable and we have (3) and (4).
\end{proof}

We are not done yet. If $\psi$ is quasilinear, then we have a linear lift $\tau$ which is $\sigma$-equivariant. We have to lift $\tau$ to $\lieh$. We  have not used yet that $\psi$ preserves $D_s$. Let $\Phi\subset \liet^*$ be the roots of $\lieh$. For $\alpha\in\Phi$, let $\tau_\alpha$ denote the corresponding reflection in $\liet$. We have the Killing form $B_i$ on $\liet_i^*$. Since $\W_i$ acts irreducibly on $\liet_i^*$ for each $i$, any $\W$-invariant non degenerate bilinear form on $\liet^*$ has to be of the form $\oplus c_i B_i$ where $c_i\in \C^*$. 
Let $\tau\in N_{\GL(\liet)}(\W)$. Then $\tau$ permutes the factors $\liet_i$ so we have an action of $\tau$ on $\{1,\dots,n\}$ where  $\tau(\liet_i)=\liet_{\tau(i)}$ Let $\tau^*\colon\liet^*\to\liet^*$ be composition with $\tau$.  We have the push-forward $\tau_*(B_i)$ where $\tau_*(B_i)(\xi,\eta)=B_i(\tau^* \xi,\tau^* \eta)$ for $\xi$, $\eta\in\liet_{\tau(i)}^*$. Set  $\tau_*B=\oplus_i\tau_*B_i$ where $B=\oplus_i B_i$ is the Killing form.  We will also denote $B$ by $(\, ,\,)$.

 \begin{lemma}\label{lem:modscalar}
We can modify $\tau|_{\liet_i}$ by a scalar $d_i$, $1\leq i\leq n$, such that $\tau_*B=B$.
 \end{lemma}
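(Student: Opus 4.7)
The plan is to compute $\tau_*B$ explicitly, observe it is forced to have the form $\oplus c_j B_j$ for some scalars $c_j\in\C^*$, and then use the freedom of rescaling $\tau|_{\liet_i}$ to kill these constants.

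First I would observe that $\tau_*B$ is a $\W$-invariant non-degenerate symmetric bilinear form on $\liet^*$. Non-degeneracy and symmetry are immediate from the corresponding properties of $B$. For $\W$-invariance, one uses that $\tau$ normalizes $\W$: for $w\in\W$ there exists $w'\in\W$ with $\tau\circ w=w'\circ \tau$, and $B$ is $w'$-invariant. Next, because each $\W_i$ acts nontrivially only on $\liet_i^*$ and irreducibly there, the summands $\liet_1^*,\dots,\liet_n^*$ are pairwise non-isomorphic irreducible $\W$-modules. By Schur's lemma together with the uniqueness (up to scalar) of an invariant form on each irreducible factor, every $\W$-invariant form on $\liet^*$ is of the form $\oplus_j c_j B_j$ with $c_j\in\C^*$.

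Now trace through where the pieces live. By definition $\tau^*$ sends $\liet_{\tau(i)}^*$ to $\liet_i^*$, so $\tau_*B_i$ is a form on $\liet_{\tau(i)}^*\times\liet_{\tau(i)}^*$, and $\tau_*B=\oplus_i \tau_*B_i$ is block-diagonal with respect to the decomposition $\liet^*=\oplus_j \liet_j^*$. Comparing with the expression above gives $\tau_*B_i = c_{\tau(i)} B_{\tau(i)}$ for a well-defined $c_{\tau(i)}\in\C^*$.

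Finally, replace $\tau|_{\liet_i}$ by $\tau'|_{\liet_i}:=d_i\tau|_{\liet_i}$ for scalars $d_i\in\C^*$ to be chosen. Then $(\tau')^*|_{\liet_{\tau(i)}^*}=d_i\,\tau^*|_{\liet_{\tau(i)}^*}$, and plugging into the definition gives $(\tau')_*B_i=d_i^2\,\tau_*B_i = d_i^2 c_{\tau(i)} B_{\tau(i)}$. Choosing any $d_i$ with $d_i^2 = c_{\tau(i)}^{-1}$ produces $(\tau')_*B = B$. Since scaling each $\liet_i$ separately commutes with $\W$, the modified map still lies in $N_{\GL(\liet)}(\W)$. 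There is no real obstacle here; the only subtlety is bookkeeping the indices $i$ versus $\tau(i)$, i.e.\ keeping straight that the form $\tau_*B_i$ lives on the \emph{image} factor $\liet_{\tau(i)}^*$, which dictates that the relevant scalar is $c_{\tau(i)}$ rather than $c_i$.
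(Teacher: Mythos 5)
Your proof is correct and follows essentially the same route as the paper: use that $\tau$ normalizes $\W$ to see $\tau_*B$ is a $\W$-invariant nondegenerate form, hence of the shape $\oplus_j c_jB_j$, and then rescale each $\tau|_{\liet_i}$ to remove the constants. Your explicit bookkeeping ($\tau_*B_i=c_{\tau(i)}B_{\tau(i)}$, choose $d_i$ with $d_i^2=c_{\tau(i)}^{-1}$, and note the scalings centralize $\W$) simply spells out what the paper dismisses with ``one easily sees,'' and it is valid since square roots exist in $\C^*$.
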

 
 \begin{proof}
Since $\tau$ normalizes $\W$, $\tau_*B$ is invariant under $\W$, hence is of the form $\oplus c_iB_i$.
 Since $\tau$ acts on $\{1,\dots,n\}$ by a product of cycles, one easily sees that one can modify the restriction of $\tau$ to $\liet_i$ by a scalar $d_i$ such that $\tau_*B_i=B_{\tau(i)}$, $1\leq i\leq n$. Note that this  modification  lifts to $\GL(\lieh)^H\simeq(\C^*)^n$.
 \end{proof}
 
  \begin{lemma}
  Assume that $\tau$ preserves the Killing form.
 Then for all $\alpha\in\Phi$, $\tau^*(\alpha)=c(\alpha)\phi(\alpha)$ where $\phi\colon\Phi\to\Phi$ and $c(\alpha)>0$.
 \end{lemma}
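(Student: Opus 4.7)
The plan is to extract $\tau^*\alpha$ from how $\tau^{-1}$ conjugates the reflection $\tau_\alpha$. Since $\tau$ normalizes $\W$, so does $\tau^{-1}$, and conjugation in $\GL(\liet)$ preserves the class of reflections; thus $\tau^{-1}\tau_\alpha\tau\in\W$ is a reflection, so equals $\tau_\beta$ for some $\beta\in\Phi$. A direct computation shows that the fixed hyperplane of $\tau^{-1}\tau_\alpha\tau$ is $\{y\in\liet:\alpha(\tau y)=0\}=\ker(\tau^*\alpha)$, and this must coincide with $\ker\beta$. Hence $\tau^*\alpha=c\beta$ for some $c\in\C^*$, which already establishes that $\tau^*\alpha$ lies on a root line.

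Next I would use the hypothesis that $\tau$ preserves $B$ to pin down the sign of the scalar. Dualizing, $\tau^*$ preserves the induced nondegenerate form $B^*=(\,,\,)$ on $\liet^*$; this is a short calculation using that $\tau^{-1}=\tau^t$ with respect to $B$. Applied to $\tau^*\alpha=c\beta$ this gives $(\alpha,\alpha)=c^2(\beta,\beta)$. Because the Killing form is positive definite on the real span of the root system, both $(\alpha,\alpha)$ and $(\beta,\beta)$ are strictly positive real numbers, so $c^2\in\R_{>0}$ and in particular $c\in\R\setminus\{0\}$. Replacing $(c,\beta)$ by $(-c,-\beta)\in\R\times\Phi$ if necessary yields $c>0$, and I set $\phi(\alpha)$ equal to this choice of root. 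The root systems of the simple summands of $\lieh$ are reduced, so $\C\tau^*\alpha\cap\Phi=\{\pm\phi(\alpha)\}$, and the positivity requirement $c(\alpha)>0$ makes $\phi(\alpha)$ unambiguous, giving a well-defined map $\phi\colon\Phi\to\Phi$.

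The whole argument is essentially routine linear algebra, and I anticipate no substantive obstacle. The only mildly delicate ingredients are the preservation of $B^*$ by $\tau^*$ (pure duality) and the positive definiteness of the Killing form on the real span of $\Phi$, both of which are entirely standard; the real content of the lemma is the normalizer condition, which immediately forces $\tau^*$ to send root lines to root lines.
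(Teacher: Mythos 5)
Your proposal is correct and follows essentially the same route as the paper: conjugating $\tau_\alpha$ by $\tau$ to land on a root reflection $\tau_\beta$, deducing $\tau^*\alpha=c\beta$, and using the isometry hypothesis together with positivity of $(\,,\,)$ on the real span of $\Phi$ to force $c\in\R^*$, then flipping $\beta$ to $-\beta$ to make $c>0$. The only cosmetic difference is that the paper takes the Killing form as already living on $\liet^*$ (so no dualization step is needed), and your extra remark on reducedness of $\Phi$ making $\phi$ well defined is a harmless addition.
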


 \begin{proof}
 The composition  $\tau\inv\circ\tau_\alpha\circ\tau$ is a reflection in $\W$, hence it is  $\tau_\beta$ for some $\beta\in\Phi$. Thus $\tau^*\alpha=c\beta$ for some $c\in\C^*$. Since $\tau$ preserves the Killing form,  $c^2(\beta,\beta)$ is positive, hence $c\in\R^*$. Perhaps changing $\beta$ to $-\beta$, we can arrange that $c>0$.  Set $\phi(\alpha)=\beta$ and $c(\alpha)=c$.
 \end{proof}
 
 \begin{theorem}\label{thm:centralizer}
Let $\tau\in N_{\GL(\liet)}(\W)$ be a  lift of $\psi\in\Aut_\ql(Y)$ where $\psi$ is strongly stratification preserving. Then $\tau$ lifts to $N_{\GL(\lieh)}(H)$. Hence $W$ has the lifting property.
 \end{theorem}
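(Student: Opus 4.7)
The plan has three stages. First, use Lemma \ref{lem:modscalar} to rescale $\tau$ so that it preserves the Killing form; the rescaling lifts through the central torus $(\C^*)^n\subset\GL(\lieh)^H$, so this normalization costs nothing toward producing a lift in $N_{\GL(\lieh)}(H)$. Second, combine the hypothesis that $\psi$ preserves $D_s$ with this normalization to force $\tau^*$ to permute $\Phi$ on the nose. Third, invoke the classical extension theorem for root system automorphisms to obtain $\Psi\in\Aut(\lieh)$, which lies in $N_{\GL(\lieh)}(H)$ since $H=\Inn(\lieh)$ is normal in $\Aut(\lieh)$.

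For the second stage, the preceding lemma already gives $\tau^*\alpha=c(\alpha)\phi(\alpha)$ with $c(\alpha)>0$ and $\phi\colon\Phi\to\Phi$ once $\tau_*B=B$. The codimension one stratum of $Y$ through a generic point of $\ker\alpha\subset\liet_i$ has isotropy type determined by whether $\alpha$ is long or short, and the short strata together form exactly $D_s$. Because $\psi$ preserves $D_s$, $\tau$ permutes short-root reflection hyperplanes, so $\phi$ maps short roots to short roots and (by bijectivity) long to long. Combined with $\tau_*B=B$, the identity $(\alpha,\alpha)=c(\alpha)^2(\phi(\alpha),\phi(\alpha))$ collapses to $c(\alpha)=1$, so $\tau^*\alpha=\phi(\alpha)\in\Phi$ and $\tau^*$ permutes $\Phi$.

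For the third stage, choose simple roots $\Delta\subset\Phi$; since $\tau^*(\Delta)$ is again a simple system, there is $w\in\W$ with $w\tau^*(\Delta)=\Delta$, so $w\tau^*$ is a Dynkin diagram automorphism of $\lieh$, permuting isomorphic simple factors and possibly acting by diagram symmetries within each. Fixing a Chevalley basis $\{h_\alpha,e_\alpha,f_\alpha\}$, this diagram automorphism extends to a Lie algebra automorphism of $\lieh$ by sending $e_\alpha\mapsto e_{(w\tau^*)(\alpha)}$; postcomposing with an inner automorphism representing $w\inv$ (which lifts into $H$) produces $\Psi\in\Aut(\lieh)$ whose dual on $\liet^*$ agrees with $\tau^*$ on $\Phi$. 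Semisimplicity of $\lieh$ gives $\sspan_\C\Phi=\liet^*$, so $\Psi|_\liet=\tau$. The lifting property for $W=\lieh$ follows at once: for any strongly stratification preserving $\psi\in\Aut_\ql(Y)$, the preceding proposition supplies a linear $\sigma$-equivariant lift $\tau\in N_{\GL(\liet)}(\W)$, and the construction above lifts $\tau$ further to $\Psi\in N_{\GL(\lieh)}(H)$, equivariant for the automorphism of $H$ induced by $\Psi\in\Aut(\lieh)$.

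The step I expect to require the most care is the third one: verifying that the Chevalley-style automorphism really restricts to $\tau$ on $\liet$, rather than to some other linear map of $\liet$ with the same action on $\Phi$. The resolution is that semisimplicity of $\lieh$ (equivalently, $\sspan_\C\Phi=\liet^*$) pins down $\Psi|_\liet$ from its dual on $\Phi$, while the preliminary rescaling of Lemma \ref{lem:modscalar} absorbs the one remaining ambiguity---a scalar on each simple factor---into the central torus $(\C^*)^n\subset\GL(\lieh)^H$.
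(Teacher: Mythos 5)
Your endgame (show that $\tau^*$ actually preserves $\Phi$ and then invoke the classical fact that every automorphism of the root system is induced by an element of $\Aut(\lieh)\subset N_{\GL(\lieh)}(H)$) is a legitimate alternative to the paper's argument, which instead reduces $\phi|_\Delta$ to a diagram automorphism and then shows the remaining $\tau^*$ centralizes $\W$, hence is a positive scalar, hence the identity, on each $\liet_i^*$. But your second stage has a gap at exactly the point where the hypothesis on $D_s$ has to do its work. From $\tau_*B=B$ you get $(\alpha,\alpha)=c(\alpha)^2(\phi(\alpha),\phi(\alpha))$, and you conclude $c(\alpha)=1$ from ``short goes to short.'' That conclusion requires $(\alpha,\alpha)=(\phi(\alpha),\phi(\alpha))$, which is clear when $\alpha$ and $\phi(\alpha)$ lie in the same simple factor (or in factors of the same type, since the Killing form is intrinsic), but $\tau$ may permute the factors $\liet_i$, and you never rule out that it matches factors of different types. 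This is not a vacuous worry: $B_n$ and $C_n$ ($n\geq 3$) have the same Weyl group with the same reflection arrangement, while the Killing norms of their short (and long) roots differ; for a $\tau$ interchanging a $B_3$-factor with a $C_3$-factor one would get $c(\alpha)\neq 1$ and $\tau^*$ would not preserve $\Phi$, so your third stage could not even start. Indeed such $\tau$ exist if one only assumes that $\psi$ preserves $D=D_s\cup D_\ell$, which is precisely why strong stratification preservation is needed; your argument as written does not show that preservation of $D_s$ excludes them.

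The gap is repairable. If $\tau(\liet_i)=\liet_j$, then $\tau$ conjugates $\W_i$ to $\W_j$ as linear reflection groups, and $\phi$ restricts to a bijection of $\Phi_j$ onto $\Phi_i$ taking short roots onto short roots and long onto long (using that $\psi$ and $\psi\inv$ preserve $D_s$, hence also $D_\ell$). Comparing the numbers of short roots excludes the only pair of non-isomorphic irreducible root systems with linearly conjugate Weyl groups, namely $B_n$ versus $C_n$, $n\geq 3$ (they have $2n$ resp.\ $2n(n-1)$ short roots); hence matched factors have the same type, the Killing norms of corresponding length classes agree, and your collapse to $c(\alpha)\equiv 1$ is valid. (The paper gets the same-type conclusion for free, since its Cartan-integer argument produces a diagram isomorphism $\Delta_j\to\Delta_i$.) With this inserted, stage three goes through, modulo the routine bookkeeping you already flag: the automorphism of $\lieh$ built from the Chevalley basis acts on $\liet^*$ by the linear extension of its action on $\Delta$, so after composing with a representative of $w\inv$ in $N_H(T)$ it acts on $\liet^*$ by $\tau^*$ and on $\liet$ by $\tau$; and an element of $N_{\GL(\lieh)}(H)$ restricting to $\tau$ on $\liet$ is automatically a lift of $\psi$, since $G$-invariant functions are determined by their restrictions to $\liet$ (Chevalley restriction).
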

 
 \begin{proof} We may assume that $\tau$ preserves the Killing form and induces a bijection $\phi$ of $\Phi$ where $\tau^*(\alpha)=c(\alpha)\phi(\alpha)$ and $c(\alpha)>0$, $\alpha\in\Phi$. Thus $\tau^*$ permutes the reflection hyperplanes in $\liet^*_\R$, the real span of the roots. Hence $\tau^*$ permutes the chambers  and there is an element $w\in \W$ such that $w\tau^*$ preserves the Weyl chamber $C(\Delta)$ where $\Delta$ is a base of $\Phi$. Hence we may assume that $\tau^*$   preserves $C(\Delta)$ and it follows that $\phi$ preserves $\Delta$.
 
Let $i\in\{1,\dots,n\}$ and set $j=\tau(i)$. Let $\alpha$, $\beta\in\Delta_j$ where $\Delta_j$ is the base of $\Phi_j\subset\liet_j^*$.  
Let $\langle \beta,\alpha\rangle$ denote $2(\beta,\alpha)/(\alpha,\alpha)$. Since $\tau^*$ is an isometry we have 
$$
\langle\beta,\alpha\rangle\langle\alpha,\beta\rangle=\langle\tau^*(\beta),\tau^*(\alpha)\rangle\langle \tau^*(\alpha),\tau^*(\beta)\rangle= \langle\phi(\beta),\phi(\alpha)\rangle\langle\phi(\alpha),\phi(\beta)\rangle.
$$
Thus $\phi\colon \Delta_{j}\to\Delta_i$ preserves the product $\langle\beta,\alpha\rangle\langle\alpha,\beta\rangle$. Since $\tau$ preserves the collection of reflection hyperplanes for the short roots, we have that $|\alpha|<|\beta|$ if and only if $|\phi(\alpha)|<|\phi(\beta)|$ where $|\alpha|$ is the norm of $\alpha$, etc. Now the $\langle\beta,\alpha\rangle$   are negative (or zero) and one can   see that the numbers $\langle\beta,\alpha\rangle$ are preserved by $\phi$ (see \cite[Ch.\ III, Table 1]{HumphLieAlg}). Hence the restriction of $\phi$ to $\Delta$ is a diagram automorphism. Now changing $\tau$ by a diagram automorphism, which lifts to $N_{\GL(\lieh)}(H)$, we can reduce to the case that $\phi$ is the identity on $\Delta$. Thus $\tau^*$ preserves the $\liet_i^*$ and sends $\alpha$ to $c(\alpha)\alpha$, $\alpha\in\Delta_i$, where $c(\alpha)>0$. Hence $\tau^*$ commutes with the simple reflections on $\liet^*$, so it centralizes $\W$.  Since each $\W_i$ acts irreducibly on $\liet_i^*$, $\tau^*$ must be multiplication by a (positive) scalar on each $\liet_i^*$. Since  $\tau^*$ is an isometry, we see that we have reduced to the case that $\tau^*$ is the identity.
\end{proof}
  
\section{Modularity}

We want to  prove that $(V,G)$ is admissible if it is large.
 We first need to recall some notions and results from \cite{SchDiffSimple, SchLiftingDOs}. Let $H$ be a linear algebraic group and $X$ an $H$-variety. Let $X_{(n)}$ or $(X,H)_{(n)}$ denote the  set of points of $X$ with isotropy group of dimension $n$. Then $X_{(n)}$ is constructible and we define the \emph{modularity of $X$\/}, $\mod(X,H)$, by
 $$
 \mod(X,H)=\sup_{n\geq 0}(\dim X_{(n)}-\dim H+n)
 $$
 where, as usual, the dimension of the empty set is $-\infty$. If $X_{(n)}$ is irreducible, then $\dim X_{(n)}-\dim H+n$ is the transcendence degree of $\C(X_{(n)})^H$. We have that
$$
\mod(X,H)=\dim X-\dim H+\sup_{n\geq 0}(n-\codim X_{(n)}).
$$

 Now suppose that  $X_{(0)}\neq\emptyset$. We say that $X$ is \emph{$k$-modular\/} if $\mod(X\setminus X_{(0)},H)\leq \dim X-\dim H-k$. Equivalently, $\codim X_{(n)}\geq n+k$ for all $n\geq 1$. Let us assume from now on that $H$ is reductive and that  $X$ is an affine $H$-variety with FPIG. Then $X_{(0)}\neq\emptyset$ and $\dim X-\dim H=\dim\quot XH$ so that $X$ is $k$-modular if and only if $\mod(X\setminus X_{(0)},H)\leq\dim \quot XH-k$.
 
 There are geometric interpretations of $X$ being $0$-modular. From now on assume that $X$ is smooth. Let $A_1,\dots,A_r$ be a basis of the Lie algebra of $H$. We may consider the $A_i$ as vector fields $\tilde A_i$ on $X$, hence as functions $f_{A_i}$ on $T^*X$. Then $X$ is $0$-modular if and only if the $f_{A_i}$ are a regular sequence in $\O(T^*X)$, i.e., their set of common zeroes has codimension $r$. Another interpretation is the following. Let $\mu\colon T^*X\to\lieh^*$ be the moment mapping which sends $\xi\in T^*_xX$ to the element of $\lieh^*$ whose value on $A\in\lieh$ is $\tilde A(x)(\xi)$. Then $X$ is $0$-modular if and only if all the fibers of $\mu$ have codimension $r$ \cite[Remark 8.6]{SchLiftingDOs}.

 \begin{remark}\label{rem:mod=dim}
 Note that if $X$ is $k$-modular for any $k\geq 0$, then $\mod(X,H)=\dim  X-\dim H$.
 \end{remark}
 
Suppose that $X$ is   $2$-principal. Then we can find invariant functions $h_1$ and $h_2$ which vanish on $X\setminus X_\pr$ and are a regular sequence in $\O(X)$. If $X$ is also $2$-modular, then $h_1$, $h_2$, $f_{A_1},\dots,f_{A_r}$ is a regular sequence in $\O(T^*X)$  \cite[Lemma 9.7]{SchLiftingDOs}. Then we can apply  \cite[Proposition 8.15]{SchLiftingDOs}:

\begin{proposition}
Suppose that there are  $h_1$, $h_2$ as above such that $h_1$, $h_2$ and the $f_{A_i}$  are  a regular sequence in $\O(T^*X)$. Then $\pi_*\colon\Diff^k(X)^H\to\Diff^k(\quot XH)$ is surjective for every $k\geq 0$.
\end{proposition}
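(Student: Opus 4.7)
The plan is to induct on $k$, reducing to a statement about principal symbols. For $k=0$ the claim is just $\O(X)^H=\O(\quot XH)$. For $k\geq 1$, the symbol exact sequence
$$0\to\Diff^{k-1}(X)^H\to\Diff^k(X)^H\xrightarrow{\sigma_k}(\Sym^k TX)^H\to 0,$$
exact on the right because $H$ is reductive, together with its analogue on $\quot XH$, shows that it suffices to lift symbols: given $\tau\in\Sym^k T(\quot XH)$, find $\rho\in(\Sym^k TX)^H$ with the same $\pi_*$-image. Once such a $\rho$ is lifted to some $Q\in\Diff^k(X)^H$, the error $\pi_*Q-P$ lies in $\Diff^{k-1}(\quot XH)$ and is handled by the inductive hypothesis.

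I would then dualize and view symbols as functions on cotangent bundles: $(\Sym^k TX)^H$ is the space of $H$-invariant regular functions on $T^*X$ of fiber-degree $k$, and similarly for $\quot XH$. The moment map $\mu\colon T^*X\to\lieh^*$, whose components in a basis of $\lieh$ are $f_{A_1},\dots,f_{A_r}$, satisfies $\mu\inv(0)\cap T^*X_\pr=$ conormals to $H$-orbits over $X_\pr$, and quotienting by $H$ gives the familiar identification of this set with $T^*((\quot XH)_\pr)$ (modulo the finite principal isotropy). Thus a symbol $\tau$ on $\quot XH$ pulls back to an $H$-invariant regular function $\tilde\tau$ on $\mu\inv(0)\cap T^*X_\pr$, homogeneous of degree $k$ in the fibers. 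What remains is to extend $\tilde\tau$ from $\mu\inv(0)\cap T^*X_\pr$ to all of $\mu\inv(0)$, and then to lift from $\mu\inv(0)$ to $T^*X$.

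The lift step is the easy one: because $f_{A_1},\dots,f_{A_r}$ is a regular sequence, the restriction $\O(T^*X)\to\O(\mu\inv(0))$ is surjective, so any set-theoretic lift exists; Reynolds averaging over $H$ makes it $H$-invariant without disturbing its values on $\mu\inv(0)$, and decomposing by the natural $\C^*$-scaling on the fibers extracts the fiber-degree-$k$ component. The extension step is the heart of the argument. Regularity of the length-$r$ sequence $f_{A_1},\dots,f_{A_r}$ in $\O(T^*X)$ presents $\mu\inv(0)$ as a complete intersection in the smooth variety $T^*X$, hence makes it Cohen-Macaulay. Regularity of $h_1,h_2$ modulo $(f_{A_i})$ then forces the common zero locus of $h_1,h_2$ inside $\mu\inv(0)$ to have codimension $2$. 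Since $h_1,h_2$ (pulled back from $X$ to $T^*X$) vanish on $T^*X|_{X\setminus X_\pr}$, this common zero locus contains $\mu\inv(0)\setminus (\mu\inv(0)\cap T^*X_\pr)$, which therefore has codimension $\geq 2$ in the Cohen-Macaulay scheme $\mu\inv(0)$. Hartogs extension then delivers a unique $\tilde\tau\in\O(\mu\inv(0))^H$.

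The chief obstacle is the extension across $\mu\inv(0)\setminus T^*X_\pr$, because one must bound codimension at the level of $T^*X$, not just $X$, while simultaneously having Cohen-Macaulayness of $\mu\inv(0)$. Repackaging $2$-principality and $2$-modularity as the single algebraic statement that $h_1,h_2,f_{A_1},\dots,f_{A_r}$ is a regular sequence in $\O(T^*X)$ is precisely what converts the geometric input into the depth condition needed for the Hartogs argument, and this is exactly the content of the hypothesis of the proposition.
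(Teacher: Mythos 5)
The paper does not prove this statement itself; it quotes it verbatim as \cite[Proposition 8.15]{SchLiftingDOs}, with the regular-sequence hypothesis supplied by \cite[Lemma 9.7]{SchLiftingDOs}. So the comparison has to be against the argument in that reference, and your reconstruction is essentially that argument: induct on order, pass to symbols viewed as fiberwise-homogeneous functions on $T^*X$, use the moment map $\mu=(f_{A_1},\dots,f_{A_r})$ to identify $\mu\inv(0)\cap T^*X_\pr$ modulo $H$ with $T^*((\quot XH)_\pr)$, extend the pulled-back symbol across the complement using that $\mu\inv(0)$ is a complete intersection (hence Cohen--Macaulay) together with the codimension-$\geq 2$ bound coming from regularity of $h_1,h_2$ modulo the $f_{A_i}$, then lift through the surjection $\O(T^*X)^H\twoheadrightarrow\O(\mu\inv(0))^H$ and the symbol sequence, and subtract. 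This is exactly what converts $2$-principality plus $2$-modularity into the depth needed for the algebraic Hartogs step, which you correctly identify as the heart of the matter.

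Two small points of precision, neither of which is a real gap. First, the phrase ``$\tau\in\Sym^k T(\quot XH)$'' is misleading since $\quot XH$ is singular and $\gr^k\Diff(\quot XH)$ is not literally $\Sym^k$ of a tangent sheaf; but you immediately correct course by working with the symbol only over $(\quot XH)_\pr$ and treating it as a regular function on $T^*((\quot XH)_\pr)$, which is the right object. One should also record that this restriction map $\gr^k\Diff(\quot XH)\to \O(T^*((\quot XH)_\pr))$ is injective because $\quot XH$ is irreducible and an operator of order $\leq k$ with vanishing $k$-symbol over a dense open has order $\leq k-1$ globally; this is what makes the induction close up once you know $\pi_*Q$ and $P$ agree to top order over the principal stratum. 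Second, the $H$-invariance of the Hartogs extension is automatic from uniqueness of the extension, and the $H$-equivariant surjectivity of $\O(T^*X)\to\O(\mu\inv(0))$ (Reynolds plus fiber-degree decomposition) works exactly as you say because the ideal $(f_{A_i})$ is both $H$-stable and fiber-homogeneous of degree one. Overall, a faithful reconstruction.
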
 

\begin{corollary}\label{cor:main}
Suppose that $X$ is $2$-principal and $2$-modular. Then $\pi_*\colon\Diff^k(X)^H\to\Diff^k(\quot XH)$ is surjective for every $k\geq 0$.  
\end{corollary}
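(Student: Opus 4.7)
The plan is to combine the two ingredients supplied just above the corollary: the regular sequence construction on $T^*X$ (which uses both 2-principality and 2-modularity) and the preceding Proposition (which upgrades such a regular sequence into surjectivity of $\pi_*$).

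First I would use the 2-principal hypothesis to produce invariant functions $h_1, h_2 \in \O(X)^H$ vanishing on $X\setminus X_\pr$ which form a regular sequence in $\O(X)$. This is possible precisely because the 2-principal condition gives $\codim(X\setminus X_\pr) \geq 2$, so the ideal of the complement of the principal stratum contains a regular sequence of length two, and by averaging over $H$ (using reductivity) we can take these to be $H$-invariant.

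Next I would invoke the 2-modularity hypothesis together with \cite[Lemma 9.7]{SchLiftingDOs}, which is explicitly recalled in the paragraph immediately preceding the corollary: under 2-principality and 2-modularity, the sequence $h_1, h_2, f_{A_1},\dots,f_{A_r}$ is a regular sequence in $\O(T^*X)$, where $A_1,\dots,A_r$ is a basis of $\lieh$ and the $f_{A_i}$ are the associated symbols. This is the main technical input; I would treat it as a black box here.

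Finally I would apply the preceding Proposition verbatim: its hypothesis is exactly that $h_1, h_2$ and the $f_{A_i}$ form a regular sequence in $\O(T^*X)$, and its conclusion is that $\pi_*\colon\Diff^k(X)^H\to\Diff^k(\quot XH)$ is surjective for every $k\geq 0$. This yields the corollary. The proof is thus essentially a packaging of the previous two statements, and no step is a real obstacle provided one trusts the cited lemma from \cite{SchLiftingDOs}.
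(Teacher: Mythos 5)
Your proposal is correct and follows exactly the paper's own route: the corollary is precisely the packaging of the paragraph preceding it, where $2$-principality supplies the invariant regular sequence $h_1,h_2$ vanishing on $X\setminus X_\pr$, $2$-modularity plus \cite[Lemma 9.7]{SchLiftingDOs} extends it to the regular sequence $h_1,h_2,f_{A_1},\dots,f_{A_r}$ in $\O(T^*X)$, and the preceding Proposition (\cite[Proposition 8.15]{SchLiftingDOs}) then gives surjectivity of $\pi_*$.
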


In the next section we will show that our $V$ of interest is $2$-modular. We will need the following result. Recall that a group action is \emph{almost effective\/} if the kernel of the action is finite.

  \begin{lemma}\label{lem:vinberg}
  Let $H$ be connected  reductive. Let $T$ denote the connected center of $H$ and let $H_s$ denote $[H,H]$. Let $W:=W'\oplus W''$ be a direct sum of  $H$-modules where $T$ acts trivially on $W'$. Assume that the action of $T$ on $W''$ is almost effective. If $(W',H_s)$ is $0$-modular, then so is $(W,H)$.
    \end{lemma}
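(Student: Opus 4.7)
The plan is to split the isotropy computation according to $\lieh=\liet\oplus\lieh_s$ and then reduce the codimension bound on $W$ to separate bounds on $W'$ and $W''$. First I observe that for $w=(w',w'')\in W$ and $A=A_T+A_s\in\lieh$, the equation $A\cdot w=0$ decomposes as $A_s\cdot w'=0$ (since $\liet$ acts trivially on $W'$) together with $A_T\cdot w''=-A_s\cdot w''$. The projection $\lieh_w\to(\lieh_s)_{w'}$, $(A_T,A_s)\mapsto A_s$, has kernel $\liet_{w''}$, giving the key bound
\[
\dim\lieh_w\;\le\;\dim(\lieh_s)_{w'}+\dim\liet_{w''}.
\]

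Next I show that $(W'',T)$ is itself $0$-modular. Decompose $W''=\bigoplus_i W''_i$ into one-dimensional $T$-weight spaces with characters $\chi_i$. For a point $w''$ whose vanishing coordinates are indexed by $J$, $\liet_{w''}=\bigcap_{i\notin J}\ker d\chi_i$ has dimension $\dim T-\rank\{\chi_i:i\notin J\}$. Since $T$ acts almost effectively, the family $\{\chi_i\}$ spans $\liet^*$, so dropping the rank of this subfamily by $m$ forces $|J|\ge m$; hence $\{w'':\dim\liet_{w''}\ge m\}$ has codimension $\ge m$ in $W''$.

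Combining the two ingredients, the key bound yields
\[
\{w\in W:\dim\lieh_w\ge n\}\;\subseteq\bigcup_{n_s+n_T\ge n}\{w':\dim(\lieh_s)_{w'}\ge n_s\}\times\{w'':\dim\liet_{w''}\ge n_T\},
\]
a finite union of products whose codimensions in $W=W'\oplus W''$ are at least $n_s+n_T\ge n$, by the $0$-modularity hypothesis on $(W',H_s)$ and by the torus step just established. Thus $\codim_W W_{(n),H}\ge n$ for every $n\ge 1$. A generic pair $(w',w'')$ has $(\lieh_s)_{w'}=0$ and $\liet_{w''}=0$, so the key bound forces $\lieh_w=0$ and $(W,H)$ has FPIG. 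The step I expect to require the most care is the $0$-modularity of the torus action: the sharp codimension bound there uses precisely the almost-effectiveness hypothesis, and would genuinely fail without it.
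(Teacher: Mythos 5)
Your proof is correct and follows essentially the same route as the paper's: the splitting $\lieh=\liet\oplus\lieh_s$ giving $\dim\lieh_w\le\dim(\lieh_s)_{w'}+\dim\liet_{w''}$, combined with the codimension bound $\codim\{w'':\dim\liet_{w''}\ge m\}\ge m$ for the torus action (which the paper simply quotes from Vinberg) and the $0$-modularity of $(W',H_s)$. The only differences are cosmetic: you prove the torus estimate directly from the weight decomposition and phrase the conclusion as a finite union of products rather than the paper's supremum computation.
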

 
\begin{proof} 
By Vinberg \cite{VinbergComplexity}, for every $q\geq 0$, $\codim (W'',T)_{(q)}\geq q$.  Let $w=(w',w'')$ where $w'\in W'$ and $w''\in(W'',T)_{(q)}$. If $\dim H_w=p$, then $p\geq q$ and  $\dim (H_s)_{w'}=p'-q$ for $p'\geq p$. Thus
\begin{equation*}
\begin{aligned}&\sup_{p \geq q}(p-\codim ((W'\times (W'',T)_{(q)},H)_{(p)})\\
\leq &\sup_{p'\geq q}(p'-q -\codim ((W',H_s)_{(p'-q)})+q-\codim(W'',T)_{(q)}\\
\leq&\sup_{p'\geq 0}(p'-\codim(W',H_s)_{(p')})\leq 0.
\end{aligned}
\end{equation*}
Since $q\leq p$ is arbitrary, we see  that $(W,H)$ is $0$-modular.
\end{proof}

\section{Large representations.}

 We assume that  $W=\oplus_{i=1}^n r_i\lieh_i$ is large. Recall that $W$ is $2$-principal. By Corollary \ref{cor:largelifting} a quasilinear $\psi\in\Aut_\ql(\quot WH)$ has a biholomorphic $\sigma$-equivariant lift $\Psi$. Then $\Psi'(0)$ is also a lift, so that $W$ has the lifting property. Thus, by Corollary \ref{cor:main}, we only need to show that $W$ is $2$-modular. It is easy to see that  $W$ is $2$-modular if and only if each $r_i\lieh_i$ is $2$-modular, so we may reduce to the case that $H$ is simple. The case $H=\PSL_2$ follows from \cite[Theorem 11.9]{SchLiftingDOs}, so we only have to consider the case where the rank of $H$ is at least two. Moreover, we need only consider the case of two copies of $\lieh$ since increasing the number of copies can only increase the modularity \cite[Proposition 11.5]{SchLiftingDOs}.

Let $v \in W \setminus W_{(0)}$. Then $\dim H_v > 0$ and
there is an injective homomorphism $\lambda : L \to H_v$ where $L$ is a copy of
the multiplicative group $\C^*$ or the additive group $\C^+$.  First consider
the case where $L = \C^*$. Acting by an element of $H$, we may assume that
$\Im \lambda \subseteq T$, where $T$ denotes a maximal torus of $H$.  Identify $L$ with its image in $T$.  Since the action of $T$ on $W$ is diagonalizable and $L\subset T$, there are
only finitely many possible $W^L$. Let $C_L$ denote $C_H(L)$.
Then $H\cdot W^L$ is constructible and the $H$-orbits there all intersect $W^L$ in a union of $C_L$ orbits. Hence $\mod(H\cdot W^L,H)\leq\mod(W^L,C_L)$.

 Now suppose that $L = \C^+$.  By the Jacobson-Morozov theorem,
$\lambda$ extends to an injective homomorphism, also called $\lambda$, from $\SL_2$ or $\PSL_2$ to 
$H$. We identify $L$ with its image in $H$ and we let $S$ denote $\lambda(\SL_2)$ or $\lambda(\PSL_2)$ as the case may be.
Up to conjugation in $H$,  there are only finitely many possible $\lambda$. Let $C_L$ denote $C_H(S)\times T''$ where $T''$ is the maximal torus of $S$. Then again we have the estimate
that $\mod(H\cdot W^L,H)\leq\mod(W^L,C_L)$.

Since the orbit of every  point in $W\setminus W_{(0)}$ intersects one of our finite collection of sets $W^L$, and since $\dim \quot WH=\dim H$, our discussion above gives the following.

\begin{lemma}\label{lem:reducelambda}
Suppose that $\mod(W^L,C_L)\leq\dim H-2$ for all   $L$ of dimension one. Then $W$ is $2$-modular.
\end{lemma}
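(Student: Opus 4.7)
\emph{Proof plan.} The plan is to treat the lemma as essentially a repackaging of the two ingredients that the discussion preceding it has already provided: a finite cover of $W\setminus W_{(0)}$ by sets of the form $H\cdot W^L$, together with the comparison $\mod(H\cdot W^L,H)\leq\mod(W^L,C_L)$. First I would unpack the definition of $2$-modularity in the present setting. Since we have reduced to $W=2\lieh$ with $H$ simple of rank at least two, $\dim W-\dim H=\dim H$, so the statement ``$W$ is $2$-modular'' becomes
\[
\mod(W\setminus W_{(0)},H)\leq\dim H-2.
\]

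Next I would assemble the cover. If $v\in W\setminus W_{(0)}$ then $\dim H_v\geq 1$, so $H_v$ contains a one-parameter subgroup $L$ of type $\C^*$ or (via Jacobson--Morozov) of type $\C^+$, and $v\in W^L$. Conjugating into a fixed maximal torus in the first case, and using the finiteness of conjugacy classes of $\SL_2$ and $\PSL_2$ subgroups of $H$ in the second, one can take $L$ from a fixed finite list. This gives
\[
W\setminus W_{(0)}=\bigcup_{L}H\cdot W^L,
\]
a finite union.

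The final step is the arithmetic of modularity under finite unions. Because $(A\cup B)_{(n)}=A_{(n)}\cup B_{(n)}$ and the dimension of a finite union is the maximum of the dimensions of its pieces, $\mod(A\cup B,H)=\max(\mod(A,H),\mod(B,H))$. Applying this to the cover above, together with the inequality $\mod(H\cdot W^L,H)\leq\mod(W^L,C_L)$ from the paragraphs preceding the lemma and the hypothesis $\mod(W^L,C_L)\leq\dim H-2$, yields
\[
\mod(W\setminus W_{(0)},H)=\max_L\mod(H\cdot W^L,H)\leq\max_L\mod(W^L,C_L)\leq\dim H-2,
\]
which is exactly $2$-modularity.

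I do not anticipate any real obstacle here, since the substantive geometric content—the Jacobson--Morozov construction producing $L$ when $H_v$ is unipotent, the finiteness of the family $\{L\}$ up to $H$-conjugacy, and the bound $\mod(H\cdot W^L,H)\leq\mod(W^L,C_L)$ via the $C_L$-orbit decomposition of $Hv\cap W^L$—has already been carried out above the lemma. The only point requiring attention is to keep the reduction to $W=2\lieh$ in view, so that the target $\dim W-\dim H-2$ in the definition of $2$-modularity simplifies correctly to $\dim H-2$ and matches the hypothesis exactly.
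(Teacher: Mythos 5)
Your proof is correct and matches the paper's reasoning: the lemma is, as you say, a summary of the preceding discussion — the finite cover $W\setminus W_{(0)}=\bigcup_L H\cdot W^L$, the bound $\mod(H\cdot W^L,H)\leq\mod(W^L,C_L)$, and the behavior of modularity under finite unions together with $\dim W-\dim H=\dim H$ for $W=2\lieh$. Nothing is missing.
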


 The next two propositions show that we always have $\mod(W^L,C_L)\leq\dim H-2$, hence they complete our proof that $W$ is admissible, hence good.

 \begin{proposition}
 Suppose that $L=\C^*$. Then $\mod(W^L,C_L)\leq\dim H-2$.
 \end{proposition}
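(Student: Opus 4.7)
The plan is to compute $\mod(W^L, C_L)$ exactly via the Levi structure of $C_L$, and then to verify the required bound by a short root-system count. After conjugation we may assume $L\subset T$, with cocharacter $\lambda\in\liet$. The $T$-weight decomposition of $\lieh$ identifies $W^L = (2\lieh)^L = 2\lie c_L$, where $\lie c_L = \lieh^L$ is the Levi subalgebra, and $C_L$ acts on $W^L$ by its own adjoint representation. Decompose $C_L = Z^\circ\cdot C_s$ with $Z^\circ$ the connected center (containing $L$) and $C_s = [C_L,C_L]$ semisimple; set $\liez = \Lie(Z^\circ)$, so that $\lie c_L = \liez\oplus \lie c_s$. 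Since $Z^\circ$ is central in $C_L$, it acts trivially on $\lie c_L$, and the $C_L$-stabilizer of a point $(z_1, x_1, z_2, x_2)$ with $z_i\in\liez$, $x_i\in\lie c_s$ equals $Z^\circ\cdot (C_s)_{(x_1,x_2)}$. A stratum-by-stratum substitution into the definition of modularity then gives
$$
\mod(2\lie c_L,\,C_L) \;=\; 2\dim\liez + \mod(2\lie c_s,\,C_s).
$$

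Next, I would show $(2\lie c_s, C_s)$ is $0$-modular, so that $\mod(2\lie c_s, C_s) = \dim\lie c_s$. Since $0$-modularity is preserved under products of actions, it suffices to verify it for each simple factor of $C_s$ acting on two copies of its own adjoint representation. For a rank-one factor the model is $\PSL_2$ on $2\liesl_2\cong 2\C^3$ (as $\SO_3$), where the only strata of positive isotropy dimension are the proportional-pair locus (codimension $2$, stabilizer dimension $1$) and the origin (codimension $6$, stabilizer dimension $3$), both satisfying the required codimension condition. For simple factors of rank $\geq 2$ the analogous $0$-modularity can be obtained from Richardson-type codimension estimates for the commuting variety, or from the present argument arranged inductively on $\rank H$. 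Combining,
$$
\mod(W^L,\,C_L) \;=\; \dim\lie c_L + \dim\liez.
$$

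It remains to verify the root-theoretic inequality $\dim\lie c_L + \dim\liez \leq \dim\lieh - 2$, equivalently $|\Phi\setminus\Phi(C_L)| \geq \dim\liez + 2$. Choose a base $\Delta$ of $\Phi$ with $\alpha(\lambda)\geq 0$ for all $\alpha\in\Delta$, and split $\Delta = \Delta_0\sqcup\Delta_+$ where $\Delta_0 = \{\alpha\in\Delta : \alpha(\lambda) = 0\}$. Then $\Delta_0$ is a base of $\Phi(C_L)$, $|\Delta_+| = \dim\liez$, and the positive roots of $\lieh$ outside $\Phi(C_L)$ are exactly those whose $\Delta$-expansion involves an element of $\Delta_+$; in particular $\Phi^+\setminus\Phi(C_L)^+ \supseteq \Delta_+$. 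If $|\Delta_+|\geq 2$, we immediately get $|\Phi\setminus\Phi(C_L)| \geq 2|\Delta_+| \geq |\Delta_+| + 2$. If $|\Delta_+| = 1$, say $\Delta_+ = \{\alpha\}$, connectedness of the Dynkin diagram of the simple $\lieh$ (of rank $\geq 2$) forces $\alpha$ to have a neighbor $\beta\in\Delta_0$, so $\alpha + \beta$ is a positive root not vanishing on $\lambda$; hence $|\Phi\setminus\Phi(C_L)| \geq 4 > 3 = \dim\liez + 2$.

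The principal obstacle in this plan is the $0$-modularity claim for $(2\lie c_s, C_s)$ in the higher-rank semisimple case, which must be arranged so as not to create circular dependence on the main admissibility result being established; an induction on $\rank H$ with base case provided by the $\PSL_2$ calculation above (together with Theorem 11.9 of \cite{SchLiftingDOs}) would suffice.
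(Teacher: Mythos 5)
Your proposal is correct and follows essentially the same route as the paper's own proof: pass to the Levi $\lieh^L = \lie c_L$, split off the connected center $T' = Z^\circ$ of $C_L$ to reduce the modularity of $(W^L, C_L)$ to that of $(2\lie c_s, C_s)$ (in the paper this is the step invoking Remark \ref{rem:mod=dim}), handle the semisimple part by the same induction on rank with $\PSL_2$ as base case (the paper cites \cite[Theorem 11.9]{SchLiftingDOs}), and finish with the identical root count, using connectedness of the Dynkin diagram for the case $\dim T' = 1$. The only cosmetic differences are that you make the bookkeeping $\mod(W^L,C_L)=2\dim\liez+\mod(2\lie c_s,C_s)$ explicit and mention a possible alternative via commuting-variety estimates, which you do not pursue.
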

     
 \begin{proof}
As above,  we may assume that $L$ is contained in a maximal torus $T$ of $H$.   The action of $C_L$ on $W^L$ is twice the adjoint representation of  $C_L$. A maximal torus of $C_L$ is $T$, and  the connected center of $C_L$ is a subtorus $T'$ of $T$ containing $L$. Then $C_L/T'$ is semisimple, so its action on $W^L$ is a trivial representation plus the sum of twice the adjoint representation of each simple component $M_i$ of $C_L/T'$. By induction on rank, $2\lie m_i$ is $2$-modular if $M_i$ has rank at least two. If $\lie m_i\simeq\liesl_2$, then   $2\lie m_i$ is $1$-modular. It then follows from Remark \ref{rem:mod=dim} that 
 $$
 \mod(W^L,C_L) =\dim W^L-\dim C_L+\dim T' =\dim \lieh^L+\dim T'.
 $$
Let $\alpha_1,\dots,\alpha_k$ be the positive roots of $\lieh$ which are nontrivial when restricted to $T'$. As $T'$-module, 
$$
\lieh=\theta_m+\bigoplus^k_{i=1} ( \lieh_{\alpha_i}+\lieh_{-\alpha_i})
$$ 
where $m=\dim \lieh^L$ and $\theta_m$ denotes the trivial module of dimension $m$.   We need to show that $\dim\lieh-2-(m+\dim T')$ is nonnegative, i.e., that  $-2-\dim T'+2k\geq 0$. Since the action of $T'$ on $\lieh$ is effective, $k\geq \dim T'$. Thus, if $\dim T'\geq 2$, we obtain the desired estimate. The only problem is the case that $T'=L$ has dimension one.  Suppose that only one positive root $\alpha$ of $H$ acts nontrivially on $L$. We may assume that $\alpha$ is simple. Then, since $H$ has rank at least two, there is a simple root $\beta$ (acting trivially on $L$) such that $\alpha+\beta$ is a root. Then $\alpha+\beta$ acts nontrivially on $L$ and we have a contradiction. Hence $k\geq 2$ and  $\mod(W^L,C_L)\leq\dim H-2$. 
  \end{proof}

 \begin{proposition}
 Suppose that $L=\C^+$. Then $\mod(W^L,C_L)\leq\dim\quot WH-2$.
 \end{proposition}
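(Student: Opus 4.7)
The plan is to show that $(W^L, C_L)$ is $0$-modular by applying Lemma \ref{lem:vinberg} (Vinberg), and then to convert this into the desired modularity bound via Remark \ref{rem:mod=dim} together with a short dimension count. Note that $\dim\quot WH = 2\dim\lieh - \dim H = \dim H$, since $H$ is simple.

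First, I would decompose $\lieh$ as an $M\times S$-module, writing $\lieh=\bigoplus_\alpha N_\alpha\otimes V_{k_\alpha}$, with $N_\alpha$ an irreducible $M$-module and $V_k$ the $(k+1)$-dimensional irreducible $S$-module.  Then $\lieh^e=\bigoplus_\alpha N_\alpha$ (identifying $N_\alpha\otimes V_{k_\alpha}^e$ with $N_\alpha$), and $T''$ acts on $N_\alpha\subset\lieh^e$ with weight $k_\alpha$.  The summands with $k_\alpha=0$ make up $\lieh^S=\lie m$, on which the connected center $T_M$ of $M$ acts trivially.  Split $W^L=2\lieh^e=W'\oplus W''$ with $W'=2\lie m$ and $W''$ the sum over $k_\alpha>0$, and set $T=T_M\times T''\subset C_L$.

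Next, I verify the two hypotheses of Lemma \ref{lem:vinberg}.
\emph{(a) $(W',M_s)$ is $0$-modular.}  As $M_s$-module, $W'\cong 2\lie m_s$ plus a trivial summand, and $(2\lie m_s,M_s)=\prod_i(2\lie m_i,M_i)$, where each simple factor $M_i$ has rank strictly less than $\rank(H)$ (since $T_M$ and $T''$ generate a commuting torus of dimension $\rank(M)+1\leq\rank(H)$).  By induction on rank, with the rank-one case $\liesl_2$ supplied by \cite[Theorem 11.9]{SchLiftingDOs}, each $(2\lie m_i,M_i)$ is at least $0$-modular; trivial summands preserve $0$-modularity, so $(W',M_s)$ is $0$-modular.
\emph{(b) $T$ acts almost effectively on $W''$.}  Since $M=C_H(S)$ acts trivially on $\lie s\subset\lieh$, the Lie subalgebra $\lie s$ appears as a summand $N\otimes V_2$ with $N$ the trivial $M$-module.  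If a cocharacter $(t^a,t^b)$ of $T$ acts trivially on $W''$, evaluating on this summand gives $2b=0$, so $b=0$.  Then $a\in X_*(T_M)$ acts trivially on every $N_\alpha$ with $k_\alpha>0$, and trivially on $\lie m$ by centrality, hence trivially on all of $\lieh^e$.  Because $a\in M$ commutes with $f\in\lie s$, $\ad a$ commutes with $\ad f$; iterating $\ad f$ on highest-weight lines spans every $V_m\subset\lieh$, so $a$ acts trivially on $\lieh$.  Hence $a\in Z(H)$, which is finite, forcing $a=0$.

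Lemma \ref{lem:vinberg} then gives that $(W^L,C_L)$ is $0$-modular, so by Remark \ref{rem:mod=dim}, $\mod(W^L,C_L)=\dim W^L-\dim C_L=2\dim\lieh^e-\dim M-1$.  Writing $\lieh=\bigoplus_m a_m V_m$ as $S$-module, this equals $2\sum_m a_m-a_0-1$, and the target inequality $\mod(W^L,C_L)\leq\dim H-2=\sum_m(m+1)a_m-2$ reduces to $\sum_{m\geq 2}(m-1)a_m\geq 1$.  This holds because $\lie s\hookrightarrow\lieh$ contributes a copy of $V_2$, giving $a_2\geq 1$.  The main obstacle is the almost-effectiveness argument in \emph{(b)}; the key point is that elements of $C_H(S)$ commute not just with $e$ but with the full $\SL_2$-triple, which enables the propagation of triviality via $\ad f$.
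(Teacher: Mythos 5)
Your proof is correct and follows essentially the same route as the paper's: decompose $\lieh$ into $S$-isotypic pieces, split $W^L=2\lieh^S\oplus W''$, apply Lemma \ref{lem:vinberg} (with the induction on rank for the semisimple part of $C_H(S)$ and almost-effectiveness of the torus $T'\times T''$ detected on $\lie s\simeq R_2$), then conclude via Remark \ref{rem:mod=dim} and the same count $\sum_{i\geq 2}(i-1)m_i\geq 1$ using $m_2\geq 1$. Your verification of almost-effectiveness by commuting with $\ad f$ is only a mild variant of the paper's observation that the central torus acts through the multiplicity spaces, so there is no substantive difference in approach.
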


 \begin{proof}
Let $L\subset S\subset H$ be as discussed above.  As $S$-module,   $\lieh=\sum_i m_i R_i$ where $R_i$ denotes the space of binary forms of degree $i$ and $m_i\geq 0$ is the multiplicity of $R_i$. The fixed points of $L$ in each $R_i$ have dimension one, so that $W^L$ has dimension $2\sum_i m_i$. The  Lie algebra of $C_L$ has dimension $m_0+1$ and the action of $C_H(S)$ on   $2\lieh^S$ is twice its adjoint representation. 
Write $C_H(S)^0=C_s T'$ where $C_s$ is semisimple and $T'$ is the connected center. As we saw above,  if $C_s$ is nontrivial, its action on $2\lieh^S$ is $1$-modular. Recall that $T''$ is the maximal torus of $S$.   Now $T'$ acts on $m_iR_i$ via a homomorphism to $\GL_{m_i}$, hence $T'$ acts almost effectively on $\oplus_{i\geq 1} m_iR_i^L$. Since $T'$ commutes with $S$, it acts trivially on    $\lie s\subset m_2 R_2$. However, $T''$ acts nontrivially on $\lie s^L$. Thus $T'\times T''$ acts almost effectively on  $W^L$.   Using Lemma \ref{lem:vinberg}, we get that $\mod(W^L,C_L)$ is $\dim W^L-\dim C_H(S)-1$. Now $\dim W^L$ is $2(m_0+\sum_{i\geq 1}m_i)$ and $\dim C_H(S) =m_0$. We need that $\dim W^L-m_0-1\leq\dim\lieh-2$. Hence we need 
 $$
 2(m_0+\sum_{i\geq 1} m_i)-m_0-1\leq\sum_{i\geq 0}(i+1)m_i-2\text{, i.e., }0\leq  -1+\sum_{i\geq 2} (i-1)m_i.
 $$
Since $\lie s\simeq R_2\subset\lieh$, we have that $m_2\geq 1$, giving us the desired inequality.
  \end{proof}
 
\section{Proofs of the main theorems}

\begin{lemma}\label{lem:rank2}
Let $H$ be a simple adjoint group of rank at least two. Let $S$ be a codimension one stratum of $Y:=\quot\lieh H$. Then $\overline{S}$ contains a codimension two stratum of $Y$.
\end{lemma}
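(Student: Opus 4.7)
The plan is to work on the Cartan side via Chevalley's restriction theorem. We have $Y = \quot\lieh H \simeq \liet/\W$, and the stratification of $Y$ is governed by which reflection hyperplanes a point lies on: the stratum of the image of $x \in \liet$ is determined by the sub-root system $\Phi_x := \{\alpha \in \Phi : \alpha(x) = 0\}$ up to the action of $\W$, and the codimension of this stratum in $Y$ equals $\rank \Phi_x$. In particular the codimension one strata correspond to the $\W$-orbits on roots (one orbit if $\lieh$ is simply laced, two otherwise), and the codimension two strata correspond to $\W$-orbits of rank two subsystems $\Phi' \subset \Phi$ (the possible types being $A_1 \times A_1$, $A_2$, $B_2$, or $G_2$).

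Given the codimension one stratum $S$, pick a root $\alpha \in \Phi$ such that $S$ is the image of the regular locus of the reflection hyperplane $H_\alpha \subset \liet$. Since $\rank \lieh \geq 2$, we can pick another root $\beta \in \Phi$ not proportional to $\alpha$. I would then consider the subspace $H_\alpha \cap H_\beta \subset \liet$, which has codimension two. Its image in $\liet/\W$ is contained in $\overline{S}$, because $H_\alpha \cap H_\beta \subset H_\alpha$ and $\overline{S}$ is precisely the image of $H_\alpha$.

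The remaining task is to identify a codimension two stratum inside this image. If $\rank \lieh = 2$, then $H_\alpha \cap H_\beta = \{0\}$, and the point $0 \in Y$ is itself a stratum of codimension $\rank \lieh = 2$, so we are done. If $\rank \lieh \geq 3$, then $H_\alpha \cap H_\beta$ has positive dimension, and only finitely many other hyperplanes $H_\gamma$ meet it in a proper subspace. A generic point $x$ in $H_\alpha \cap H_\beta$ therefore satisfies $\Phi_x = \Phi \cap \sspan(\alpha,\beta)$, which has rank $2$, so $\pi(x)$ lies in a codimension two stratum of $Y$, and this stratum lies in $\overline{S}$.

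The only non-routine point is verifying that generic points of $H_\alpha \cap H_\beta$ really have $\Phi_x$ of rank exactly two rather than higher, but this is immediate from the fact that the finite union $\bigcup_{\gamma \notin \sspan(\alpha,\beta)} H_\gamma \cap (H_\alpha \cap H_\beta)$ is a proper closed subset of the irreducible subspace $H_\alpha \cap H_\beta$. So no genuine obstacle arises, and the lemma follows from this case analysis.
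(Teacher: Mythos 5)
Your proof is correct and follows essentially the same route as the paper: pass to $\liet/\W$ via Chevalley restriction, note $\overline{S}$ is the image of a root hyperplane $\liet_\alpha$, intersect with a second root hyperplane $\liet_\beta$, and observe that (generic points of) $\liet_\alpha\cap\liet_\beta$ map to a codimension two stratum contained in $\overline{S}$. The paper phrases this with two simple roots and the intersection $\overline{S}\cap\overline{S'}$, while you spell out the genericity argument in more detail, but the underlying idea is identical.
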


\begin{proof}
Let $\liet$ be a maximal toral subalgebra of $\lieh$. Then there is a simple root $\alpha$ such that $\overline{S}$ is the image in $Y$ of the corresponding root hyperplane $\liet_\alpha$. Let $\beta$ be another  simple root  and let $S'$ denote the corresponding stratum of $Y$. Then $\overline{S}\cap\overline{S'}$ is the image of $\liet_\alpha\cap\liet_\beta$ and contains a stratum of codimension two.
\end{proof}

 Let $V=\oplus_ir_i\lieg_i$ and $G$ be as in Theorem \ref{thm:mainhelp}. We have decomposed $V$ as $V_1\oplus V_2\oplus V_3$ and $G$ as $\widetilde G_1\times \widetilde G_2\times \widetilde G_3$ where $(V_1,\widetilde G_1)$ is a direct sum of representations isomorphic to $(2\liesl_2,\PSL_2)$, $(V_2,\widetilde G_2)$ is the adjoint representation of a semisimple group and $(V_3,\widetilde G_3)$ is large. Then $Z=Z_1\times Z_2\times Z_3$ where $Z=\quot VG$ and $Z_i=\quot{V_i}{\widetilde G_i}$, $1\leq i \leq 3$. We have shown that each $(V_i,\widetilde G_i)$ is good and has the lifting property. Now we have to show the same for $(V,G)$.
 
 For $S_i$ a stratum of $Z_i$, let $S_i'$ denote the product of $S_i$ and the principal strata of the other two factors of $Z$, $1\leq i\leq 3$.
Let $\psi\in\HAut(Z,0)$ be stratification preserving and write $\psi=(\psi_1,\psi_2,\psi_3)$ where $\psi_i$ is a germ sending $Z$ to $Z_i$, $1\leq i\leq 3$.
 \begin{lemma}\label{lem:together}
\begin{enumerate}
\item $\psi$ preserves  the strata of the form $R_3'$ and the  sets of the form $Z_1\times Z_2\times R_3$ where $R_3$ is a stratum of $Z_3$.
\item The analogues of (1) hold for strata $R_1$ of $Z_1$ and $R_2$ of $Z_2$.
\item $d\psi(0)$ has the form $\diag(A_1,A_2,A_3)$ where $A_i\colon T_0(Z_i)\to T_0(Z_i)$ is an isomorphism, $1\leq i\leq 3$.
\item For $z$ near $0$, $\psi_3(z_1,z_2,z_3)$ is a strata preserving automorphism of $Z_3$, fixing $0$, parameterized by $(z_1,z_2)\in Z_1\times Z_2$.
\item $\psi_{3,0}(z)=\lim_{t\to 0}t\inv\cdot \psi_3(t\cdot z)=\lim_{t\to 0}t\inv\cdot\psi_3(0,0,t\cdot z_3)$ exists. Thus $\psi_{3,0}(z)=\psi_{3,0}(0,0,z_3)$ can be considered as an element of  $\Aut_\ql(Z_3)$.
\item The analogues of (4)--(5) hold for $\psi_1$ and $\psi_2$.
\item $\psi$ is deformable and $\psi_0=(\psi_{1,0},\psi_{2,0},\psi_{3,0})$ where $\psi_{i,0}\in\Aut_\ql(Z_i)$ is strata preserving, $1\leq i\leq 3$.
\end{enumerate}
  \end{lemma}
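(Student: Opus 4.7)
The plan is to first establish that $\psi$ respects the product decomposition $Z=Z_1\times Z_2\times Z_3$ at the stratification level (parts (1) and (2)), then to deduce block-diagonality of $d\psi(0)$ in (3), and finally to apply the goodness of each factor $(V_i,\widetilde G_i)$ to a holomorphically varying family to obtain deformability with the asserted structure (parts (4)--(7)). For (1), the decisive input is that $V_3$ is large, so $Z_3$ has no codimension-one strata; hence every codimension-one stratum of $Z$ is either of type (A), of the form $R_1\times Z_{2,\pr}\times Z_{3,\pr}$ with $R_1$ a codim-$1$ stratum of $Z_1$, or of type (B), of the form $Z_{1,\pr}\times R_2\times Z_{3,\pr}$. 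Using the isotropy poset of Lemma~\ref{lem:closure}, a direct check shows that a non-principal stratum with isotropy class $(L_1,L_2,L_3)$ lies in the closure of some codimension-one stratum if and only if $L_1$ is non-principal in $Z_1$ or $L_2$ is non-principal in $Z_2$. Equivalently, the non-principal strata of form $R_3'$ are characterized intrinsically as those contained in no codimension-one closure. Since $\psi$ is a stratification-preserving biholomorphism, it respects closures and so permutes the $R_3'$-form strata among themselves. For the set-level claim I introduce the \emph{$R_3'$-shadow} of a stratum $T$, namely the unique minimal $R_3'$-form stratum whose closure contains $T$ (determined by $T$'s $Z_3$-isotropy component); the map $\psi$ preserves shadows, so it sends $Z_1\times Z_2\times R_3$ to $Z_1\times Z_2\times\phi_3(R_3)$ for a bijection $\phi_3$ on $Z_3$-strata.

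For (2), I intrinsically separate the codimension-one strata of $Z$ into biholomorphism-invariant classes using the codimension in $\overline{C}$ of the ``excess'' singular locus $\operatorname{Sing}(\overline{C})\setminus(\overline{C}\cap\operatorname{Sing}(Z))$. For a type-(A) stratum coming from a $(2\liesl_2,\PSL_2)$-factor of $V_1$, the quadric cone $\overline{S_i}$ has an isolated singularity at its vertex, producing an excess singular locus of codimension $2$ in $\overline{C}$. For a type-(B) stratum coming from a simple rank-$\geq 2$ factor $\lieg_k$ of $V_2$, Lemma~\ref{lem:rank2} together with the fact that the discriminant in $\liet_k/\W_k$ has singular locus of codimension one in itself yields excess singular locus of codimension $1$ in $\overline{C}$. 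For a type-(B$'$) stratum coming from an $\liesl_2$-factor of $V_2$, the closure is $Z_1\times\{0\}\times Z_2^{\neq k_0}\times Z_3$, which is smooth away from $\operatorname{Sing}(Z)$, so the excess singular locus is empty. These three codimensions being distinct and biholomorphism-invariant, $\psi$ preserves each class, and in particular separates codimension-one strata of $Z_1$-type from those of $Z_2$-type. An isotropy-poset argument analogous to (1) then characterizes the non-principal $R_1'$-form strata as those not contained in any codimension-one closure of $Z_2$-type and with principal $R_3'$-shadow, and symmetrically for $R_2'$-forms; the corresponding shadow constructions give preservation of the sets $R_1\times Z_2\times Z_3$ and $Z_1\times R_2\times Z_3$.

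Parts (3)--(7) then follow largely by formal manipulations. For (3), intersecting the preserved sets from (1) and (2) with $R_j=\{0_{Z_j}\}$ and $R_k=\{0_{Z_k}\}$ shows $\psi$ preserves each coordinate axis $Z_i\times\{0_{Z_j}\}\times\{0_{Z_k}\}$; differentiating at $0$, $d\psi(0)$ preserves each summand of $T_0 Z=T_0 Z_1\oplus T_0 Z_2\oplus T_0 Z_3$, hence is block diagonal with invertible blocks. For (4), the $R_3=\{0\}$ case of (1) gives $\psi_3(z_1,z_2,0)=0$ for $(z_1,z_2)$ near $0$, and invertibility of $A_3$ from (3) makes $\psi_3(z_1,z_2,\cdot)\in\HAut(Z_3,0)$ a biholomorphism for small $(z_1,z_2)$, strata-preserving by (1). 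For (5), since $V_3$ is good (as established in \S5--\S6 via admissibility), each $\psi_3^{(z_1,z_2)}=\psi_3(z_1,z_2,\cdot)$ is deformable, and by the parameter-holomorphicity remark of the introduction $(\psi_3^{(z_1,z_2)})_t(z_3)$ extends holomorphically in $(t,z_1,z_2,z_3)$ to $t=0$. Writing $\psi_{3,t}(z)=(\psi_3^{(t\cdot z_1,\,t\cdot z_2)})_t(z_3)$ and letting $t\to 0$ yields $\psi_{3,0}(z)=(\psi_3^{(0,0)})_0(z_3)$, which depends only on $z_3$ and equals $\lim_{t\to 0}t^{-1}\cdot\psi_3(0,0,t\cdot z_3)$. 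Parts (6) and (7) are obtained by symmetric reasoning and by combining the three coordinates. The main obstacle I anticipate is the intrinsic distinction in the second paragraph between type-(A) and type-(B$'$) codimension-one closures, which share most coarse invariants; the excess-singular-locus codimension is the finest invariant I see that cleanly separates them, relying on the contrast between an isolated cone singularity and the smooth point $\{0\}\in\C$.
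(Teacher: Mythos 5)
Your handling of parts (1) and (3)--(7) is essentially the paper's own argument: for (1) you use, as the paper does, that a stratum lies in the closure of a codimension-one stratum exactly when its $Z_1$- or $Z_2$-component is non-principal (there being no codimension-one strata in $Z_3$), and your ``shadow'' bookkeeping is just another way of passing from the permuted closed sets $Z_1\times Z_2\times\overline{R_3}$ to the sets $Z_1\times Z_2\times R_3$ via Lemma \ref{lem:closure}; parts (4)--(7) are the paper's reduction to goodness of the factors with holomorphic parameters (in (4) you should invoke an inverse function theorem for singular varieties, as the paper does, rather than only invertibility of $A_3$, but that is minor).

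The genuine gap is the separation step in (2). Your invariant is the codimension in $\overline C$ of $\operatorname{Sing}(\overline C)\setminus(\overline C\cap\operatorname{Sing}(Z))$, and you claim it equals $1$ for every codimension-one stratum coming from a rank $\geq 2$ factor of $V_2$, citing the singularity of ``the discriminant.'' But the relevant divisor is not the full discriminant: it is $D_{k,s}$ or $D_{k,\ell}$ separately, the image of the hyperplanes of a \emph{single} root length, and this is frequently smooth. For $\lieg_k$ of type $B_n$ the short-root divisor is the zero set of the basic invariant $x_1^2\cdots x_n^2$, a smooth coordinate hyperplane in $\liet_k/\W_k\simeq\C^n$ (similarly the long-root divisor for $C_n$); for $B_2$ and $G_2$ both divisors are smooth curves (for $B_2$, with invariants $p_1=x^2+y^2$, $p_2=x^2y^2$, they are $\{p_2=0\}$ and $\{p_1^2=4p_2\}$). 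For such factors $\operatorname{Sing}(\overline C)\subset\overline C\cap\operatorname{Sing}(Z)$ and your excess locus is empty, so the asserted trichotomy $2/1/\emptyset$ is false and several type-(B) strata fall into the same class as type (B$'$). What you actually need is only that no $Z_2$-type codimension-one closure has excess singular locus of codimension exactly $2$, so that the quadric-cone strata from $Z_1$ remain separated; this may well be checkable case by case, but you neither state nor prove it, and as written the decisive claim is wrong. Note that the paper sidesteps singular loci altogether and stays inside the stratification: the closure of a cone stratum adds only a point of codimension three in its $\C^3$-factor, whereas by Lemma \ref{lem:rank2} the closure of a codimension-one stratum of a rank $\geq 2$ factor of $Z_2$ contains a codimension-two stratum, and rank-one factors are excluded on dimension grounds; adapting that route would repair your step (2).
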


\begin{proof}
Let $R_3$ be a stratum of $Z_3$. Then $\psi(R_3')$ is  of the form $S_1\times S_2\times   S_3$ where $S_i$ is a stratum of $Z_i$, $1\leq i \leq 3$. If $S_1$ is not $Z_{1,\pr}$, then $\psi(R_3')$ is in the closure of a codimension one stratum, hence so is $R_3'$. Since $Z_3$ has no codimension one strata, we get a contradiction. Thus $S_1=Z_{1,\pr}$ and similarly $S_2=Z_{2,\pr}$. Hence $\psi$ permutes the strata of the form $R_3'$ and all sets of the form $Z_1\times Z_2\times\overline{R_3}$. By Lemma \ref{lem:closure} we see that $\psi$ preserves sets  of the form $Z_1\times Z_2\times R_3$, giving (1).

 Let $S_1$ be a codimension one stratum of $Z_1$. Then $S_1'$ is the codimension one stratum $R$ of a copy of $Y:=\quot {(2\liesl_2)}{\PSL_2}$ times the principal stratum  of the other factors of $Z$. Moreover,  $\overline{R}$ is $R$ union a point which has codimension three in $Y\simeq\C^3$.  Now consider the case where we have a codimension one stratum $S_2$ of $Z_2$. Then $S_2'$ is the codimension one stratum $R$ of a copy of $Y:=\quot{\lie l}L$, where $L$ is simple, times the principal stratum of the other factors of $Z$. If $\psi$ sends $S_1'$ to  $S_2'$, then the corresponding space $\quot{\lie l}L$ has dimension at least three and contains no codimension two strata. Thus $\rank L>1$ and we get a contradiction to  Lemma \ref{lem:rank2}.   It follows that $\psi$ permutes all strata of the form $S_1'$  so $\psi$ permutes the sets of the form $S_1\times Z_2\times Z_3$. The analogous results hold for   strata $S_2$ of $Z_2$ and we have (2).

Clearly $d\psi_0$ has the form given in (3). Thus for $(z_1,z_2)$ near $0$, $\psi_3(z_1,z_2,z_3)$ is a germ of an automorphism of $Z_3$ which fixes $0$. For this one uses the inverse function theorem for varieties, see, for example, \cite[Lemma 14.15]{HeiSchCartan}. Here we don't need to worry about preserving strata since $V_3$ is large. Hence we have (4) and since $V_3$ is good, we have (5).  In the case of $\psi_1$, we note that $\psi$ respects the stratification by strata $S_1\times Z_2\times Z_3$ where $S_1$ is a stratum of $Z_1$. Thus $\psi_1(z_1,z_2,z_3)$, near $0$, is a family of strata preserving automorphisms of $Z_1$, fixing $0\in Z_1$, with parameters $(z_2,z_3)\in Z_2\times Z_3$. The analogous result holds for $\psi_2$ and since $V_1$ and $V_2$ are good, we have (6).  Part (7)  is immediate.
\end{proof}

\begin{proof}[Proof of Theorem \ref{thm:mainhelp}]
It follows from Lemma \ref{lem:together} that $V$ is good, and we also have that any $\psi\in\Aut_\ql(Z)$ is a product of elements $\psi_i\in\Aut_\ql(Z_i)$, $1\leq i \leq 3$,  where $\psi_2$ preserves the closures of the strata corresponding to short roots. Then it follows from Proposition \ref{prop:sl2good}, Theorem  \ref{thm:centralizer} and Corollary \ref{cor:largelifting} that $V$ has the lifting property.
\end{proof}

\begin{proof}[Proof of Theorem \ref{thm:main}]
By Theorem \ref{thm:mainhelp} we are in the following situation. We have $\psi\in \HAut(Z)$  and a deformation $\psi_t$ with $\psi_1=\psi$ and $\psi_0$  lifts to a $\sigma$-equivariant $\Psi_0\in\GL(V)$. Let $\widetilde\psi_t=\psi_t\circ(\psi_0\inv)$. Then $\widetilde\psi_t$ is an isotopy starting at the identity consisting of strata preserving automorphisms. Thus it is obtained by integrating a holomorphic (complex)  time dependent strata preserving vector field $A(t,z)$. Since $\pi_*\colon\X_h(V)^G\to\X_h(Z)$ is surjective, we can lift $A$ to a time dependent invariant  holomorphic vector field $B(t,v)$. By   \cite[Theorem 3.4]{SchAutoms} we can integrate $B$ for $0\leq t\leq 1$ to get  $G$-equivariant biholomorphic  lifts $\widetilde\Psi_t$ of $\widetilde \psi_t$. Then $\widetilde\Psi_1\circ\Psi_0$ is a $\sigma$-equivariant  biholomorphic lift of $\psi$.  
\end{proof}
 
 \section{Compact Lie groups}
 Let $K_i$, $i=1,\dots,d$, be simple compact adjoint Lie groups and consider the natural action of $K:=\prod_i K_i$ on $W:=\oplus r_i\liek_i$ where $r_i\in\N$, $1\leq i\leq d$. 
  Let $\pi\colon W\to W/K$ denote the quotient mapping. We can put a smooth structure on   $W/K$ (see \cite{SchSmooth}), as follows. A function on $W/K$ is smooth if it pulls back to a  (necessarily $K$-invariant) smooth function on $W$. We can make this more concrete, as follows. Let $p_1,\dots,p_r$ be homogeneous generators of $\R[W]^K$ and let $p=(p_1,\dots,p_r)\colon W\to\R^r$. Now $p$ is proper and separates the $K$-orbits in $W$. Let $X$ denote $p(W)\subset\R^r$. Then $X$ is a closed semialgebraic set and $p$ induces a homeomorphism $\bar p\colon W/K\to X$. The main theorem of \cite{SchSmooth} says that $p^*C^\infty(X)=C^\infty(W)^K$ where a function $f$ on $X$ is smooth if it extends to a smooth function in a neighborhood of $X$. Now we can define the notion of a smooth mapping $X\to X$ (or $W/K\to W/K$) in the obvious way. We also see an $\R^*$-action on $X$ where $t\cdot(y_1,\dots,y_r)=(t^{e_1}y_1,\dots,t^{e_r}y_r)$ for $t\in\R^*$, $(y_1,\dots,y_r)\in X$ and $e_j$   the degree of $p_j$, $j=1,\dots,r$.   We denote by $\Aut_\ql(W/K)$ the quasilinear automorphisms of $W/K$, i.e., the automorphisms which commute with the $\R^*$-action. 
 
  We have the usual stratification of $W/K$ determined by the conjugacy classes of isotropy groups of orbits.  There is a unique open  stratum, the principal stratum,  which consists of the orbits with trivial slice representation. It is connected and dense in $W/K$. Let  $E_s$ be the union of the closures of the codimension one strata of $W/K$ corresponding to the short roots.  We call a diffeomorphism $\Psi\colon W\to W$ \emph{$\sigma$-equivariant\/} if $\Psi\circ k=\sigma(k)\circ\Psi$, $k\in K$, where $\sigma\in\Aut(K)$.
  
  \begin{theorem}
  Let $\psi\colon W/K\to W/K$ be a diffeomorphism. Then $\psi$   lifts to a $\sigma$-equivariant diffeomorphism $\Psi\colon W\to W$ if and only if  $\psi$ preserves   $E_s$ 
  \end{theorem}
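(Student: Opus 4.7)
The plan mirrors the proof of Theorem \ref{thm:main}, adapted to the smooth category. Necessity is straightforward: a $\sigma$-equivariant lift forces $\sigma$ to permute the simple factors $K_i$ while preserving the root system of each, so short-root codimension one strata must map to short-root codimension one strata, yielding $\psi(E_s) = E_s$.

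For sufficiency, I would first observe that any diffeomorphism of $W/K$ (with the smooth structure from \cite{SchSmooth}) automatically preserves the orbit type stratification; this is classical for compact group actions and also follows from a smooth analogue of Corollary \ref{cor:preservestrata}, using that $W$ is an orthogonal $K$-module. Combined with $\psi(E_s) = E_s$, this makes $\psi$ strongly stratification preserving, and in particular $\psi(0) = 0$ since $\{0\}$ is the unique zero-dimensional stratum. I would then deform $\psi$ via $\psi_t(z) = t^{-1} \cdot \psi(t \cdot z)$ for $t > 0$: strata preservation forces the Taylor expansion of each $\psi^* y_i$ at $0$ to begin in weight at least $e_i$, so $\psi_0 := \lim_{t \to 0^+} \psi_t$ exists as a smooth quasilinear automorphism of $W/K$.

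Next, applying Theorem \ref{thm:mainhelp}(2) to the complexification $V = W \otimes_{\R} \C$ with $G = \prod G_i$ (the complex adjoint groups of the $(\liek_i)_{\C}$), I would lift $\psi_0$ to a $\sigma$-equivariant element $\Psi_0 \in N_{\GL(V)}(G)$. This lift can be arranged to preserve the real form $W \subset V$ because the normalizer elements constructed in Sections 3--6 (permutations of the factors, diagram automorphisms, scalings by positive reals, and products thereof) are compatible with the compact real structure on $G$. Finally, I would form the smooth isotopy $\widetilde{\psi}_t = \psi_t \circ \psi_0^{-1}$ on $W/K$, lift the associated time dependent smooth strata preserving vector field to a $K$-invariant smooth vector field on $W$ via a smooth analogue of Proposition \ref{prop:codimone}(2), and integrate for $0 \leq t \leq 1$ to produce a smooth $K$-equivariant isotopy $\widetilde{\Psi}_t$ of $W$. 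The composition $\Psi := \widetilde{\Psi}_1 \circ \Psi_0$ is then the desired $\sigma$-equivariant diffeomorphic lift.

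The main obstacle I anticipate is transferring the complex-category machinery of the previous sections to the smooth setting. Specifically, one must verify that $\psi_0$ exists as a genuine smooth (not merely formal) map, that $\Psi_0$ can be realized in the real normalizer $N_{\GL(W)}(K)$, and that smooth $K$-invariant vector fields on $W$ surject onto smooth strata preserving vector fields on $W/K$. Each should follow from its holomorphic analogue via the standard compact--complex correspondence, but each requires care in the smooth (as opposed to real analytic) setting.
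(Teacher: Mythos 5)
Your overall architecture is the same as the paper's — deform $\psi$ to a quasilinear $\psi_0$, lift $\psi_0$ to the normalizer of $K$ in $\GL(W)$, then lift the isotopy $\psi_t\circ\psi_0^{-1}$ equivariantly — but the two steps you flag as ``obstacles'' are genuine gaps, and the tools you suggest for closing them are not adequate as stated.

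First, the existence of $\psi_0$. You argue that strata preservation forces the Taylor expansion of $\psi^*y_i$ at $0$ to begin in weight at least $e_i$, hence the limit exists. That reasoning is borrowed from the holomorphic case, where a function is determined by its Taylor series. In the smooth category the Taylor series controls nothing about the remainder, so this argument does not by itself give existence, let alone smoothness, of $\lim_{t\to 0^+}t^{-1}\cdot\psi(t\cdot z)$. The paper does not even attempt a deformability criterion here; it invokes Losik's theorem \cite[Theorem 2.2]{LosikLift}, a result specific to the smooth compact setting which directly asserts the existence of the quasilinear limit $\psi_0$ for a diffeomorphism of the orbit space. (Similarly, the automatic strata preservation that you call ``classical'' is Bierstone's theorem \cite[Theorem A]{BierstoneLifting}; the paper cites it precisely because it is what makes the condition on $D$ and $D_0$ from Theorem \ref{thm:main} disappear and leaves only $E_s$.)

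Second, lifting $\psi_0$ to $N_{\GL(W)}(K)$. Your plan is to apply Theorem \ref{thm:mainhelp}(2) to the complexification and then argue that the resulting lift preserves $W\subset V$ because the normalizer elements are ``compatible with the compact real structure.'' That is not automatic and the paper does not proceed this way. It reduces via Lemma \ref{lem:together} to the three constituent cases and treats each one directly. For the $(2\R^3,\SO(3,\R))$ factors the argument of Proposition \ref{prop:sl2good} transfers verbatim. For the adjoint factor $\liek$ the paper does not complexify and descend; it uses Strub's theorem \cite{StrubLocal} to produce a lift $\tau\in N_{\GL(\liet)}(\W)$ directly over $\R$ and then runs Lemma \ref{lem:modscalar} through Theorem \ref{thm:centralizer} to get to $N_{\GL(\liek)}(K)$ — this is where the hypothesis that $\psi_0$ preserves $E_s$ is actually consumed. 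For the large factor the paper explicitly warns that realizing the lift over $\R$ is \emph{not} a trivial consequence of Corollary \ref{cor:largelifting} and cites \cite[Proposition 8.5]{SchAutoms} to handle it. Your proposal names the problem but does not supply these ingredients. Finally, for the isotopy step, the paper appeals to the isotopy lifting theorem of \cite{SchLifting} rather than re-proving a smooth analogue of the vector-field surjectivity in Proposition \ref{prop:codimone}(2); the latter would also require justification in the smooth category.
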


\begin{proof} 
Necessity of the condition on $\psi$ is established as before. Now assume that $\psi$ preserves $E_s$.
By \cite[Theorem 2.2]{LosikLift}, $\psi_0(x)=\lim_{t\to 0}t\inv\cdot\psi(t\cdot x)$ exists for every $x\in W/K$ where, of course, $\psi_0$ is quasilinear. By \cite[Theorem A]{BierstoneLifting}, $\psi_0$ (and $\psi$) necessarily preserve the principal stratum and the union of the codimension one strata. We want to show that $\psi_0$ has a $\sigma$-equivariant lift to $\GL(W)$.   Complexifying and applying Lemma \ref{lem:together} we see that it is enough to treat the following three cases:
\begin{enumerate}
\item $W$ is a direct sum of representations isomorphic to $(2\R^3,\SO(3,\R))$.
\item $W=\liek$ where $K$ is compact semisimple. Here we need that $\psi_0$ preserves $E_s$.
\item $W$ is large (defined as before).
\end{enumerate}
The proof of lifting in case (1) is as in Proposition \ref{prop:sl2good}. For case (3) one can use \cite[Proposition 8.5]{SchAutoms} which allows one to deduce the lifting property in (3) from Corollary \ref{cor:largelifting}. Note that (3) is not a trivial consequence of Corollary \ref{cor:largelifting}, since we have to show that the lift preserves $W\subset W\otimes_\R\C$. For case (2), a theorem of Strub \cite{StrubLocal} (see also \cite[Theorem 2.11]{SchIsoPreserveInvars}) shows that  there is a lift of $\psi_0$ to $\tau\in N_{\GL(\liet)}(\W)$ where $\liet$ is the Lie algebra of a maximal torus of $K$ and $\W$ is the Weyl group. Then the argument of Lemma \ref{lem:modscalar} through Theorem \ref{thm:centralizer}  shows that $\tau$ lifts to an element of $N_{\GL(\liek)}(K)$. Thus, if $\psi$ preserves $E_s$, there is a smooth deformation $\psi_t$  with $\psi_1=\psi$ such that $\psi_0$ lifts to $\Psi_0\in N_{\GL(W)}(K)$. By the isotopy lifting theorem of \cite{SchLifting} there is an   equivariant diffeomorphism  lifting   $\psi\circ\psi_0\inv$, hence there is a   $\sigma$-equivariant  diffeomorphism $\Psi$  lifting   $\psi$.  
\end{proof}
  
\def\cprime{$'$}
\providecommand{\bysame}{\leavevmode\hbox to3em{\hrulefill}\thinspace}
\providecommand{\MR}{\relax\ifhmode\unskip\space\fi MR }
\providecommand{\MRhref}[2]{%
  \href{http://www.ams.org/mathscinet-getitem?mr=#1}{#2}
}
\providecommand{\href}[2]{#2}

   \end{document}